\let\oldmarginpar\marginpar
\renewcommand\marginpar[1]{\-\oldmarginpar[\raggedleft\footnotesize #1]%
{\raggedright\footnotesize #1}}
\newtheoremstyle{theorem}{}{}{\itshape}{\parindent}{\scshape}{.}{ }{}
\theoremstyle{theorem}
\newtheorem{theorem}{Theorem}[]
\newtheorem{prop}{Proposition}[section]
\newtheorem{lemma}[prop]{Lemma}
\newtheoremstyle{remark}{}{}{}{}{\bf}{.}{ }{}
\theoremstyle{remark}
\newtheorem*{remark}{Remark}
\newtheorem*{definition}{Definition}
\newtheorem{example}{Example}[section]
\newcommand{\nats}{\mathbb{N}}
\newcommand{\ra}{\rightarrow}
\DeclareSymbolFont{rsfs}{U}{rsfs}{m}{n}
\DeclareMathSymbol{\ACC}{A}{rsfs}{"41}
\begin{document}

\title[Expansive surface attractors]%
{Classification of expansive attractors on surfaces}

%    Information for first author:
\author{Marcy Barge}
\address{Department of Mathematics, Montana State University, Bozeman, MT 59717}
\email{barge@math.montana.edu}
%\thanks{}
%\thanks{}

\author{Brian F. Martensen}
%    Current address (if needed):
%\curraddr{}
\address{Department of Mathematics and Statistics, Minnesota State University, Wissink 273, Mankato, MN 56001}
\email{martensen@mnsu.edu}

\begin{abstract}
We prove the conjecture of F. Rodriguez Hertz and J. Rodriguez Hertz (\cite{herher}) that every nontrivial transitive expansive attractor of a homeomorphism of a compact surface is a derived from pseudo-Anosov attractor.
\end{abstract}

\maketitle

\section{Introduction}
The prototype for the sort of object we consider in this paper is the derived from Anosov attractor created by Smale \cite{sma}. In this example, the unstable foliation of a hyperbolic toral automorphism is split open along a single leaf to create a hyperbolic 1-dimensional expanding attractor in the 2-torus. Hyperbolic 1-dimensional expanding attractors can be created similarly on surfaces
of higher genus by splitting open the unstable foliation of a pseudo-Anosov homeomorphism along all leaves associated with singularities. These attractors have a very regular structure: locally they are the product of a Cantor set with an arc, globally they are the inverse limit of an expanding map of a branched 1-manifold. This description, and a complete classification of hyperbolic 1-dimensional expanding attractors, in any ambient dimension, is given by Williams in \cite{wil2}.

Remarkably, if hyperbolicity is dispensed with altogether, and expanding is weakened to expansive,
one can still say a lot about the structure of attractors on surfaces. In the article \cite{herher} Rodriguez Hertz and Rodriguez Hertz prove that every expansive surface attractor has a local product structure at all but finitely many points. They conjecture that if such an attractor is also transitive, then it must be a derived from
pseudo-Anosov attractor. That is, it must be conjugate with an attractor obtained by splitting open the unstable foliation of a pseudo-Anosov (or Anosov) homeomorphism along finitely many leaves. We
prove this conjecture here.

It follows from our result that transitive and expansive surface attractors are very nearly hyperbolic.
Indeed, simply unzipping finitely many unstable branches and splitting finitely many periodic orbits turns
such an attractor into a hyperbolic attractor. Hyperbolic one-dimensional attractors on surfaces are either conjugate with one-dimensional substitution tiling spaces, if orientable, or  are double-covered by one-dimensional substitution tiling spaces if not orientable (\cite{holmar}).
A consequence is that for transitive and expansive attractors on surfaces, topology more-or-less determines dynamics: if  two such attractors $A$ and $B$, for homeomorphisms $f$ and $g$, are homeomorphic, then there are positive integers $m$ and $n$ so that $f^m|_A$ and $g^n|_B$ are conjugate (see \cite{barswa} - this also follows from \cite{mosher}).

\section{Definitions, statement of the main theorem, and an example}

\begin{definition}
A homeomorphism $f$ of a  compact surface is {\em pseudo-Anosov} provided there exists a {\em dilatation} $\lambda >1$ and a pair of invariant, continuous, transverse (except at singularities) foliations $\mathcal{F}^{s}$ and $\mathcal{F}^{u}$. The foliations carry transverse measures $\mu^{s}$ and $\mu^{u}$ which are expanded by precisely $\lambda$ under each iteration of $f$ and $f^{-1}$ , respectively, and possess a finite 
number of singularities near each of which  $\mathcal{F}^{s}$ and $\mathcal{F}^{u}$ are homeomorphic with the foliations of $\mathbb{C}$ by curves of constant real and imaginary parts (modulo sign if $k$ is odd) of $z^{k/2}$ for some $k\in\nats\setminus\{ 2\}$. 
\end{definition}

\begin{remark}The above definition includes {\em Anosov homeomorphisms} (no singularities) as well as {\em relative pseudo-Anosov homeomorphisms} (those with 1-pronged, i.e., $k=1$, singularities).
\end{remark}

Speaking roughly, a derived from pseudo-Anosov attractor is an attractor obtained by `unzipping' finitely
many branches of the unstable foliation at finitely many periodic points of a pseudo-Anosov homeomorphism. As it is rather cumbersome to make this precise (see the appendix for a careful description of unzipping), we opt for the following definition.
\begin{definition}
An attractor $A$ for a homeomorphism $f:M\to M$ is {\em derived from pseudo-Anosov} provided there is a pseudo-Anosov surface homeomorphism $g:S\to S$ and a continuous surjection $\pi:A\to S$ with the properties:

\begin{enumerate}
\item $g\circ\pi=\pi \circ f$; 

\item there are finitely many periodic points $p_1,\ldots,p_n$, $n\ge 1$, of $g$ and branches $B_{i,j}$ of the unstable foliation of $S$ at $p_i$, $j=1,\ldots,m(i)\ge1, $ for each $i$, so that $\pi$ is one-to-one off
$\pi^{-1}(\cup B_{i,j})$;

\item $\pi$ is exactly two-to-one on $\pi^{-1}(\cup (B_{i,j}\setminus \{p_i\}))$; and

\item if $\Phi_{i,j}:\mathbb{R}^+\cup\{0\}\to B_{i,j}$ parameterizes $B_{i,j}$, then $diam(\pi^{-1}(\Phi_{i,j}(t)))\to 0$ as
$t\to \infty$.
\end{enumerate}
\end{definition}
\begin{definition}
An invariant set $A$ for $f:M\to M$ is an {\em expansive attractor} provided there is an $\alpha>0$ so
that $\cap_{n\ge 0}f^n(B_{\alpha}(A))=A$ and $sup_{n\in \mathbb{Z}}d(f^n(x),f^n(y))>\alpha$ for all
$x\ne y\in A$. (Here $d$ is a metric on $M$ and $B_{\alpha}(A)$ is the $\alpha$-neighborhood of $A$.)
\end{definition}
The main result of this paper is the following classification theorem.
\begin{theorem}\label{classificationtheorem} If $A$ is an expansive and transitive attractor for a homeomorphism of a compact surface, and $A$ is not a single periodic orbit, then $A$ is derived from pseudo-Anosov.
\end{theorem}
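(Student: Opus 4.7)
The plan is to construct the target $(S,g)$ as a topological quotient of $A$, using the Rodriguez Hertz--Rodriguez Hertz local product structure as the engine for building transverse invariant laminations on $A$, and then applying the Hiraide--Lewowicz characterization: an expansive homeomorphism of a compact surface carrying transverse invariant foliations with finitely many prong singularities is conjugate to a pseudo-Anosov.

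First I would promote the local product structure on the regular part $A \setminus E$ (where $E$ denotes the finite exceptional set produced by Rodriguez Hertz--Rodriguez Hertz) to honest stable and unstable arc components. The unstable arcs lie inside $A$ because $A$ is an attractor, while the transverse Cantor direction carries the stable structure. Next I would classify the finitely many exceptional points: using the attractor hypothesis and expansiveness, each $e \in E$ should be either a prong-type singularity where $k$ unstable arcs meet (for some $k \ne 2$), or the shared endpoint of a pair of unstable arcs bordering a common component of $M \setminus A$. In the latter case, continuity of $f$ and finiteness of $E$ force the paired arcs to be forward asymptotic under $f$ and their shared root to be eventually periodic.

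I would then define $\pi : A \to S$ by declaring each paired configuration — a component of $M \setminus A$ together with its two bordering unstable arcs — to collapse to a single unstable branch $B_{i,j}$ rooted at a periodic point $p_i$, leaving the rest of $A$ alone. The quotient $S$ must be verified as a compact surface: away from the finitely many prong singularities the identifications should produce a locally Euclidean space, since each complementary component gets collapsed along its two-sided seam. The homeomorphism $f$ descends to a homeomorphism $g$ of $S$; properties (1)--(3) of the derived-from-pseudo-Anosov definition are built into the construction, and property (4) follows from the forward-asymptoticity of paired arcs together with expansiveness. One then applies Hiraide--Lewowicz to $(S,g)$ equipped with the induced transverse laminations to conclude $g$ is pseudo-Anosov.

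The main obstacle will be confirming that $S$ is genuinely a surface and that $g$ is genuinely expansive, rather than a more singular object obtained by ill-behaved collapsing. This requires careful bookkeeping of local charts at exceptional points (how local product rectangles piece together across $E$), of the orbit structure of paired configurations (to produce honest periodic $p_i$ of only finitely many orbits), and of the lamination geometry near prong singularities. The transitivity hypothesis is essential for closing up the construction globally, ensuring a single coherent pseudo-Anosov target rather than a disconnected or branched quotient; and the correct identification of ``which pair matches which'' depends on the attractor hypothesis to pin down how components of $M \setminus A$ are bounded within $M$.
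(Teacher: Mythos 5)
Your overall strategy -- zip up paired accessible unstable branches across complementary domains of $A$, check the quotient is a surface, and invoke Hiraide--Lewowicz -- is the same as the paper's, and the intuition behind the pairing and collapse is essentially right. But there is a genuine gap in the last step, and it is not one that "careful bookkeeping" can close.

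The problem is that the quotient homeomorphism $g:S\to S$ will \emph{not} in general be expansive, so Hiraide--Lewowicz cannot be applied to $(S,g)$ directly. Whenever a complementary domain $U$ of $A$ has exactly one accessible periodic point $p$, the collapse of $\overline{U}$ produces a $1$-pronged singularity of the induced unstable foliation at $\pi(p)$. Near a $1$-prong, distinct nearby unstable leaves stay close under all backward iteration, and the homeomorphism is only continuum-wise expansive, not expansive. This is not a pathology of sloppy charts -- it is exactly the Plykin phenomenon recalled in the paper's Example 2.1, run in reverse: the Plykin attractor \emph{is} the result of unzipping $1$-prongs of an honest pseudo-Anosov on $S^2$, and zipping back up reproduces the non-expansive map. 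The paper's fix (Section 6) is to pass to a branched double cover $\eta:\tilde{S}\to S$, branched precisely over the odd-pronged singularities, so that the lift $\tilde{g}$ has only even-pronged singularities; one then proves directly (Section 7, Lemma 7.1, which is genuinely technical) that this \emph{lift} is expansive, applies Hiraide--Lewowicz to $\tilde{g}$, and pushes the pseudo-Anosov structure back down to $(S,g)$. Your outline contains no analogue of this step, and without it the argument stops short.

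Two smaller remarks. First, your description of the collapse as taking "a component of $M\setminus A$ together with its two bordering unstable arcs" to "a single unstable branch" is not quite right: a complementary domain can have $n\ge 1$ accessible periodic points and $2n$ accessible unstable branches; the domain's closure becomes a single vertex $p_i$ of the foliation, with adjacent branches zipped up in pairs, and if $n>1$ you must also first verify that the resulting decomposition element is a non-separating continuum (the paper splits singular points accessible from multiple directions, Lemma 5.3, and then uses Moore's theorem). Second, establishing that accessible points lie on unstable branches of finitely many accessible periodic points, and that the pairing $x\mapsto o(x)$ is single-valued and continuous, is not a straightforward consequence of the product structure plus finiteness of $E$; the paper needs a prime-end argument (Proposition 4.2) to get rational rotation number on the circle of prime ends of $U$, and a local stable-continuum argument plus expansivity to nail down uniqueness of $o(x)$. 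These are worth flagging, but the missing double cover is the essential gap.
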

We end this section with an example of a well-known hyperbolic attractor which provides a  demonstration of how 1-pronged singularities will be handled in the proof of the main result.

\begin{example}\label{plykin-like}

Recall the construction of the Plykin attractor:

Let $A : \mathbb{T}^{2}\ra\mathbb{T}^{2}$ be the hyperbolic torus automorphism 
induced by \[A=\left( \begin{array}{cc}2 & 1 \\ 1 & 1 \end{array} \right). \]
The quotient  space $\mathbb{T}^{2} /\sim$, with $x\sim -x$, is homeomorphic to the 2-sphere 
$S^{2}$ and the quotient map $\pi : \mathbb{T}^{2}\ra S^{2}$ is a branched double cover with $4$ branch points. The map $A$ 
induces a homeomorphism $f_{A} : S^{2}\ra S^{2}$ which is pseudo-Anosov with 1-pronged singularities at the branch 
points.

The map $f_A$ is not expansive near the $1$-prong singularities. (It is, rather, {\em continuum-wise expansive} - every nontrivial continuum achieves diameter bigger than $\alpha$ under iteration). However,  unzipping the unstable manifold of each $1$-prong singularity (see Figure~\ref{split_fig}) produces the  expansive Plykin attractor.
\end{example} 

\begin{figure}[H]
\includegraphics[width=3in]{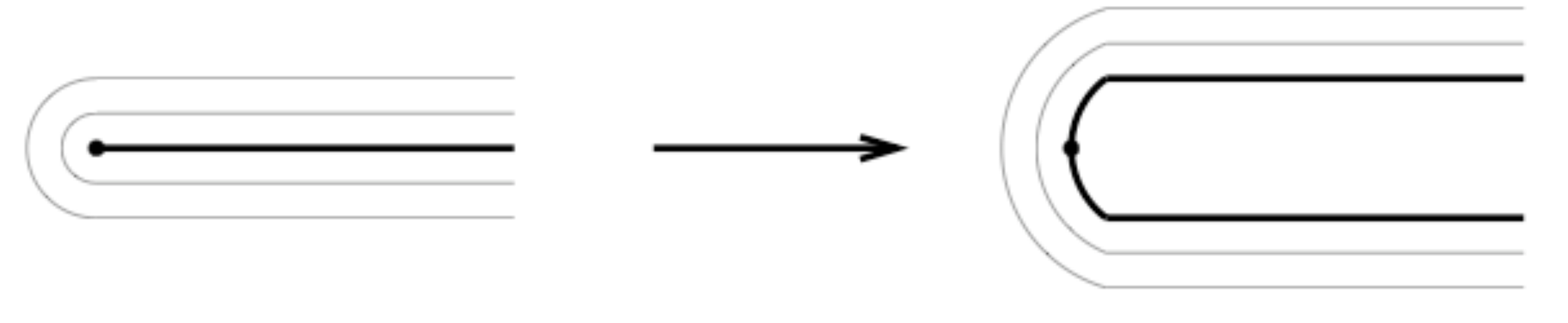}
\caption{The unstable manifold of a $1$-prong singularity is unzipped. All points accessible from the new complementary domain lie on the unstable manifold of a hyperbolic  periodic point.}\label{split_fig}
\end{figure}

The method of proof of the classification theorem will be to reverse this construction. That is, beginning with an expansive attractor, we will zip up pairs of accessible branches to produce a homeomorphism of a surface.  As in the Plykin example, the resulting homeomorphism will be nonexpansive near any $1$-prong singularities. If there are such singularities, the homeomorphism can be lifted, via a branched double cover, to a homeomorphism which is expansive on an entire surface. From a theorem proved independently by Hiraide and Lewowicz, we will know that this final homeomorphism is pseudo-Anosov, and we will deduce from this that the original attractor is derived from pseudo-Anosov.

\section{Local Product Structure}

We summarize here the results we will need from \cite{herher}. We assume that $A\subset M$ is a transitive
and expansive 
attractor consisting of more than a single periodic orbit, with expansiveness constant $\alpha$, for the surface homeomorphism $f:M\to M$. We also assume that $M$ is connected and that $A\ne M$.\\
\\
 For each $n\in \mathbb{N}$, $n\ge 2$, and
$k\in \{0,\ldots,n-1\}$, let $\psi_{k,n}$ be the homeomorphism from the sector $\{(r,\theta):r\ge 0, 2k\pi /n \le \theta \le 2(k+1)\pi /n\}$ to the upper half-plane given by $\psi_{k,n}(r,\theta)=(r,n\theta /2-k\pi)$.\\
\\
\begin{theorem}\label{productstructure}(\cite{herher}) For each $x\in A$ there is a homeomorphism $\phi$ from the unit ball $\mathbb{B}$ centered at the origin in $\mathbb{R}^2$ onto a neighborhood of $x$ in $M$ with $\Phi((0,0))=x$ and with the property that, for some 
$n=n(x)\in \mathbb{N}$, $n\ge2$: $\psi_{k,n}(\phi^{-1}(A))= ( \mathbb{R}\times C_k)\cap \mathbb{B}$, the right hand side in rectangular coordinates, where
$C_k$ is one of two types. Either:\\
(i) $C_k$ is a compact 0-dimensional subset of $[0,1/2]$ with no isolated points (that is, a Cantor set) and with $0\in C_k$, or\\
(ii) $C_k=\{0\}$.\\
Furthermore, if type (ii) occurs for some $k$, then type (i) occurs for $k\pm1$ (mod($n$)).
\end{theorem}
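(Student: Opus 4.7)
The plan is to combine expansive-dynamics techniques on surfaces with the defining attractor property and extract the product structure one sector at a time. First, I would use that $f|_A$ is expansive on a surface to invoke the Kato--Hiraide--Lewowicz theory of local stable and unstable continua: for each $x \in A$ there are compact sets $W^s_\epsilon(x)$ and $W^u_\epsilon(x)$ on which $f$ and $f^{-1}$, respectively, contract distances, together with the standard property that nearby points in $A$ belong to intersections $W^s_\epsilon(y) \cap W^u_\epsilon(z)$. Because $A$ is an attractor, $W^u_\epsilon(x) \subset A$ for every $x \in A$. The first substantive step is to show that $W^u_\epsilon(x)$ is a finite union of arcs meeting only at $x$; these arcs are the "unstable prongs," and their number is the integer $n = n(x)$.

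Second, I would fix one sector cut out between two adjacent prongs and analyze $A$ inside it. Small unstable arcs issuing from points of $A \cap W^s_\epsilon(x)$ sweep across the sector as an essentially parallel family accumulating on the two bounding prongs; a short transversal (a piece of $W^s_\epsilon(x)$) meets $A$ in a set that is compact by closedness of $A$ and zero-dimensional by expansiveness. The attractor hypothesis, used to exclude isolated points in the transverse Cantor set interior to the sector, together with the $\epsilon$-stable equivalence relation, yields a set homeomorphic to a Cantor set containing $0$, unless the sector contains no unstable arcs besides the two bounding prongs, in which case the transverse slice degenerates to $\{0\}$. The chart $\psi_{k,n} \circ \phi^{-1}$ is then built by straightening the local unstable lamination along the sector and using a single transversal as the second coordinate, producing the claimed rectangular product $\mathbb{R} \times C_k$.

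Third, for the flanking condition, I would argue by contradiction. If two adjacent sectors both had $C_k = \{0\}$, then in a neighborhood of $x$ the set $A$ would consist, on one side of $x$, of a single prong $B$ separating the two sectors; the point $x$ would be locally a free endpoint of $B$ within $A$, and no orbit in $A$ could approach $x$ transversally to $B$. This violates transitivity once combined with the type (i) structure present elsewhere in $A$, since the local product structure at a nearby generic point of $B$ would force accumulating transverse leaves that cannot disappear at $x$ without contradicting expansiveness. The principal obstacle I expect is the first step: upgrading the general fact that $W^u_\epsilon(x)$ is a nondegenerate continuum to the statement that it is a \emph{finite} prong system. Standard expansive-continuum theory alone does not preclude a wild dendrite; here one must use the surface-specific analysis of Lewowicz, which relates expansive behavior to singular foliation-like structures, together with the attractor hypothesis, to rule out infinitely many prongs accumulating at $x$ (such accumulation would produce pairs of distinct points in $A$ whose entire orbits remain within $\alpha$ of each other, contradicting the expansiveness constant).
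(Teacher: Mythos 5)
This theorem is not actually proved in the paper: it is stated as a result of Rodriguez Hertz--Rodriguez Hertz and simply cited from \cite{herher}. The only proof content the paper offers is the subsequent remark, which notes that \cite{herher} gives the product structure \emph{relative to} $A$ (a regular point of $A$ has a neighborhood \emph{in $A$} homeomorphic to a Cantor set times an arc, etc.), and that the upgrade to an \emph{ambient} product neighborhood in $M$, as stated in Theorem~\ref{productstructure}, follows from the embedding-extension result of \cite{tymwal}. So there is nothing here to compare your argument to line by line; you are reconstructing a result the paper takes as input, and you omit entirely the ambient-extension step that the remark flags as the only new ingredient needed.

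Evaluated on its own terms, your sketch identifies the right ingredients but leaves the genuinely hard steps at the level of intuition. First, the claim that $W^u_\epsilon(x)$ is a \emph{finite} prong system is, as you yourself note, the heart of the matter; but the heuristic you offer (``accumulation would produce pairs of distinct points whose entire orbits remain within $\alpha$ of each other'') does not work as stated. Points on distinct prongs accumulating at $x$ all lie in $W^u_\epsilon(x)$, hence their \emph{backward} orbits stay $\epsilon$-close by definition; nothing yet prevents their forward orbits from separating, so expansivity is not immediately violated. Ruling out infinitely many prongs, and more generally ruling out a wild dendrite, is exactly the surface-specific analysis of Lewowicz/Hiraide that \cite{herher} carries out, and it cannot be waved through. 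Second, the assertion that the transverse slice has no isolated points is attributed to ``the attractor hypothesis,'' but you give no mechanism; an isolated interior leaf in a sector is not obviously incompatible with $A$ being an attractor, and in \cite{herher} the perfectness of the transversal comes out of the expansive local analysis rather than from the attracting property per se. Third, your argument for the flanking condition invokes transitivity, but transitivity in this paper is used only to exclude $n(x)=1$ (\'epines); the flanking condition itself should follow from the already-established product structure at regular points of the offending prong (a prong sandwiched by two type~(ii) sectors would be isolated, contradicting perfectness of the transversal at a nearby regular point), not from transitivity. Finally, you never address the passage from the intrinsic product structure in $A$ to a homeomorphism of an ambient ball onto a neighborhood of $x$ in $M$, which is precisely the contribution the paper's remark attributes to \cite{tymwal}.
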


\begin{figure}[H]
\includegraphics[width=2in]{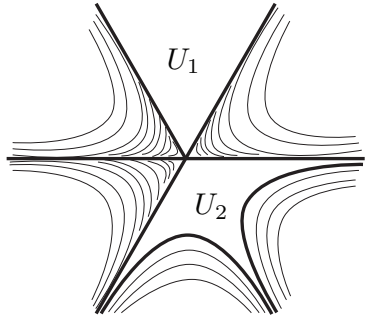}
\caption{A $5$-prong singularity of an expansive attractor.  The singularity is accessible from complementary domains $U_1$ and $U_2$.}\label{dpA_fig}
\end{figure}

If the $n=n(x)$ in the proposition is 2, then $x$ is a {\em regular point}. Otherwise, $x$ is called a {\em singular point} or an {\em n-pronged singularity}. (The possibility $n(x)=1$, i.e., $x$ is an {\em epin\'{e}}, is disallowed by the transitivity assumption.) We will call the neighborhood $\phi(\mathbb{B})$ of
$x$ a {\em product}, or {\em singular product, neighborhood} of $x$.
It is clear that there are only finitely many singular points in $A$.
\begin{remark} In \cite{herher}, the product structure obtained is relative to $A$. That is, a regular point in $A$ has a neighborhood, in $A$, homeomorphic with the product of a compact 0-dimensional set with an arc, etc. This is easily extended to ambient neighborhoods, as in Theorem~\ref{productstructure}, using \cite{tymwal}.
\end{remark}
For $x\in A$  and $\epsilon \ge 0$, let $W_{\epsilon}^u(x):=\{y\in M:d(f^{-n}(y),f^{-n}(x))\le \epsilon$ for all $n\ge 0\}$ and let $CW_{\epsilon}^u(x)$ denote the connected component of $W_{\epsilon}^u(x)$ containing $x$. Similarly, define $W_{\epsilon}^s(x)$ and $CW_{\epsilon}^s(x)$, replacing $-n$ by $n$.
\begin{lemma}\label{prongs}(\cite{herher}) For $\epsilon<\alpha/2$, $CW_{\epsilon}^u(x)$ is an $n-od$ contained in $A$, $n=n(x)$ as in Theorem~\ref{productstructure} above. Also, there is a uniform $\delta>0$ so that each prong of  $CW_{\epsilon}^u(x)$ meets the boundary of the $\delta$-ball centered at $x$. Furthermore, if $\beta$ and $\gamma$ are any two prongs of  $CW_{\epsilon}^u(x)$ and $\beta_0$ and $\gamma_0$ are the components of $\beta\cap \bar{B}_{\delta }(x)$ and $\gamma\cap \bar{B}_{\delta }(x)$ containing $x$, then the component of $CW_{\epsilon}^s(x)\cap \bar{B}_{\delta}(x)$ containing $x$ separates $\beta_0\setminus \{x\}$ from
$\gamma_0\setminus\{x\}$ in $\bar{B}_{\delta}(x)$.
\end{lemma}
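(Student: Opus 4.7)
The proof decomposes naturally into three pieces, each leaning on Theorem~\ref{productstructure}. First, I would read off a candidate $n$-od from the product chart $\phi$ at $x$: within each sector, $\psi_{k,n}$ carries $\phi^{-1}(A)$ to $(\mathbb{R}\times C_k)\cap\mathbb{B}$, and since $0\in C_k$, the horizontal axis $\mathbb{R}\times\{0\}$ is contained in this image. Pulling back, the two sector boundary rays glue with those of adjacent sectors at $x$, and across all $n$ sectors this produces an $n$-od $L^u(x)\subset A$ with $n$ prongs of positive length emanating from $x$.

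Second, I would show that near $x$, $CW^u_\epsilon(x)$ coincides with $L^u(x)$. For the inclusion $L^u(x)\subseteq W^u_\epsilon(x)$, use $f$-equivariance of the product structure: $f^{-1}$ carries a product box at $x$ into one at $f^{-1}(x)$, and the Cantor-set factor must be preserved under this conjugation, pinning down the $\mathbb{R}$-factor as the direction along which backward iterates come together. Iterating, two points on a common horizontal leaf have backward orbits trapped in a nested sequence of product boxes and so stay $\epsilon$-close. Conversely, if $y\in W^u_\epsilon(x)\cap\phi(\mathbb{B})$ sat on a distinct horizontal leaf, the transverse ``vertical'' separation would be stretched under backward iteration (the Cantor direction being expanded by $f^{-1}$) and would eventually exceed $\epsilon$, contradicting $y\in W^u_\epsilon(x)$. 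The containment $CW^u_\epsilon(x)\subset A$ then follows from the attractor property: $f^{-n}(y)\in B_\epsilon(A)$ for all $n\ge 0$ forces $y = f^n(f^{-n}(y))\in \bigcap_n f^n(B_\epsilon(A)) = A$ for $\epsilon$ sufficiently small.

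Third, for the uniform $\delta$, suppose there existed $x_m\in A$ and prongs of $CW^u_\epsilon(x_m)$ of diameter tending to zero; pass to a limit $x_m\to x_*\in A$. The product chart at $x_*$ supplies prongs of definite positive length at $x_*$, and the identification of local unstable sets above varies continuously with the base point, so the prongs at nearby $x_m$ must have positive length bounded away from zero, a contradiction. For the separation statement, I would extend the intrinsic product structure of $A$ to an ambient product neighborhood of $x$ in $M$ via \cite{tymwal}: within each sector the ambient extension supplies an arc in $CW^s_\epsilon(x)\cap \bar{B}_\delta(x)$ meeting $x$ transversally to the horizontal unstable leaves, and stitching these arcs across sectors yields a connected continuum through $x$ that separates any two prong initial segments $\beta_0\setminus\{x\}$ and $\gamma_0\setminus\{x\}$ into distinct complementary components of $\bar{B}_\delta(x)$. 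The principal obstacle, I expect, is the second step: upgrading the topological product decomposition of Theorem~\ref{productstructure} to a dynamical one that identifies $W^u_\epsilon$ and $W^s_\epsilon$ with the horizontal and vertical directions, and doing so uniformly enough over $A$ for the compactness argument to go through.
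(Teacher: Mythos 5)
This lemma is quoted from \cite{herher}; the present paper gives no proof of it, so there is no internal argument to compare against, and I assess your proposal on its own. The gap is the one you flag yourself, and it is genuine rather than a technicality. Theorem~\ref{productstructure} is a purely topological statement about a chart $\phi$ near $x$: it records what $A$ looks like locally but says nothing about how $f$ acts in the coordinates of $\phi$. When you argue that two points on a common horizontal leaf ``have backward orbits trapped in a nested sequence of product boxes and so stay $\epsilon$-close,'' you are implicitly assuming that the leaf direction is contracted by $f^{-1}$; when you argue that a transverse separation ``would be stretched under backward iteration (the Cantor direction being expanded by $f^{-1}$),'' you are implicitly assuming the Cantor factor is the stable one. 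Both assumptions are exactly the content of the identification you want to prove. Expansiveness alone only guarantees that two distinct orbits in $A$ eventually separate at \emph{some} time; it does not, by itself, tell you which of the two transverse factors of the chart grows under backward iteration, and nothing in the topological chart breaks that symmetry.

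In \cite{herher} the logic runs in the opposite direction from yours: the local sets $CW^u_\epsilon(x)$ and $CW^s_\epsilon(x)$ are first shown to be arcs or $n$-ods by a direct dynamical analysis, and the topological product chart of Theorem~\ref{productstructure} is then assembled from those dynamically defined pieces. Trying to recover the dynamical picture from the chart therefore works against the grain of the source and requires a genuine symmetry-breaking input that your sketch does not supply. (A plausible one: combine your correct observation that $W^u_\epsilon(x)\subset A$ for $\epsilon<\alpha$, via $A=\cap_{n\ge 0}f^n(B_\alpha(A))$, with the fact that in a Cantor-set-times-arc model any connected subset of $A$ through $x$ must lie in the arc factor; this at least forces $CW^u_\epsilon(x)$ into the horizontal leaf, though it still leaves the reverse inclusion and the nontriviality of $CW^u_\epsilon(x)$ to be argued.) As written, the compactness argument for the uniform $\delta$ and the separation claim involving $CW^s_\epsilon(x)$ both inherit this unproved identification, so the proposal is incomplete at its central step.
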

\section{Complementary Domains}\label{compdomains}

\begin{prop}\label{finite_complements}
Let $A\subset M$ be an expansive attractor for the homeomorphism $f:M\to M$ of the compact surface $M$.  Then, $A$ has finitely many complementary domain in $M$.
\end{prop}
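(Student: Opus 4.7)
The plan is to use the attractor hypothesis to build a compact trapping region $N$ with finitely many boundary components, and then argue via $f$-equivariance that every complementary domain must reach $M\setminus N$.

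First, using Theorem~\ref{productstructure}, the compactness of $A$, and the finiteness of its singular points, I would construct a compact subsurface $N\subset M$ with $A\subset\mathrm{int}(N)$, $f(N)\subset\mathrm{int}(N)$, $A=\bigcap_{n\ge 0}f^n(N)$, and $\partial N$ a finite disjoint union of simple closed curves in $M\setminus A$: cover $A$ by finitely many product charts, take a small thickening of their union whose boundary is disjoint from $A$, and smooth any corners. Once $N$ is chosen, $M\setminus\mathrm{int}(N)$ is a compact surface with boundary and hence has finitely many connected components $V_1,\ldots,V_k$. Each $V_j$ is connected and disjoint from $A$, so it lies in a single complementary domain of $A$; consequently the set $\mathcal{D}_{\mathrm{out}}$ of complementary domains meeting $M\setminus\mathrm{int}(N)$ has at most $k$ elements.

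The heart of the argument is that there can be no ``inside'' complementary domain, i.e., no $E\subset\mathrm{int}(N)$. From $f(N)\subset N$ one gets $N\subset f^{-1}(N)$, and so $f^{-1}(M\setminus\mathrm{int}(N))\subset M\setminus\mathrm{int}(N)$. Thus $f^{-1}(V_j)$ is connected and contained in $M\setminus\mathrm{int}(N)$, hence in some $V_{j'}$, which forces $f^{-1}$ to map $\mathcal{D}_{\mathrm{out}}$ into itself. Since $\mathcal{D}_{\mathrm{out}}$ is finite and $f^{-1}$ is already injective on the full set $\mathcal{D}$ of complementary domains, the restriction $f^{-1}|_{\mathcal{D}_{\mathrm{out}}}$ is a bijection; consequently $f$ also preserves $\mathcal{D}_{\mathrm{out}}$ and hence preserves its complement $\mathcal{D}_{\mathrm{in}}:=\mathcal{D}\setminus\mathcal{D}_{\mathrm{out}}$. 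For $E\in\mathcal{D}_{\mathrm{in}}$, induction gives $f^{-n}(E)\in\mathcal{D}_{\mathrm{in}}\subset\mathrm{int}(N)\subset N$ for every $n\ge 0$, so $E\subset f^n(N)$ for all $n$, and therefore $E\subset\bigcap_n f^n(N)=A$, contradicting $E\cap A=\emptyset$. Hence $\mathcal{D}_{\mathrm{in}}=\emptyset$ and $\mathcal{D}=\mathcal{D}_{\mathrm{out}}$ is finite.

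The main obstacle is the first step: producing $N$ with $\partial N$ a finite union of curves disjoint from $A$. This requires coherent control over the way finitely many product charts glue near singular points and along the Cantor-set direction at regular points. Once $N$ is in hand the rest of the argument is a short combinatorial-dynamical count that uses only the attractor identity $A=\bigcap_n f^n(N)$ and not expansivity or the detailed local structure.
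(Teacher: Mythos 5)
Your proposal takes a genuinely different route from the paper, and it is worth comparing the two, but there are also gaps to address.

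The paper's proof is a short, purely topological--dynamical argument that uses neither the local product structure of Theorem~\ref{productstructure} nor any surface topology beyond compactness of $M$. It runs as follows: $M\setminus B_\alpha(A)$ is compact and disjoint from $A$, hence is covered by finitely many complementary domains, so only finitely many domains meet $M\setminus B_\alpha(A)$ (call this finite set $\mathcal{D}_{\mathrm{out}}$). Every $f$-orbit of domains must meet $\mathcal{D}_{\mathrm{out}}$, for otherwise the backward iterates of that domain stay in $B_\alpha(A)$, forcing the domain into $A=\bigcap_{n\ge 0}f^n(B_\alpha(A))$. Since orbits are pairwise disjoint, there are at most $|\mathcal{D}_{\mathrm{out}}|$ orbits; so if there were infinitely many domains some orbit would be infinite, and such an orbit can visit the finite set $\mathcal{D}_{\mathrm{out}}$ only finitely often. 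A point $x$ in that domain, chosen in the (nonempty) open set $D\cap\bigcap_{n\in S}f^{-n}(B_\alpha(A))$ where $S$ is the finite exceptional set, then satisfies $f^n(x)\in B_\alpha(A)$ for all $n$, so $x\in A$, a contradiction. Your approach instead builds a compact attractor block $N$ with manifold boundary and then runs an injective-permutation argument on the finite set of ``outside'' domains to kill off any ``inside'' domain. Your step 3 is correct as written once $N$ exists with the stated properties. What the paper's route buys is that it never needs to manufacture a neighborhood with controlled boundary or a trapping region; what your route buys, if it is carried out, is a slightly cleaner combinatorial endgame.

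However, your step 1 has two distinct gaps, of which you acknowledge only one. The one you flag---arranging that $\partial N$ is a finite disjoint union of simple closed curves disjoint from $A$---is genuinely delicate but plausible: covering $A$ by finitely many closed disks inside $B_\alpha(A)$ with boundary circles in general position, the frontier of their union is automatically disjoint from $A$ (it lies outside any open set containing $A$) and is a finite union of arcs and crossing points that can be smoothed to circles, so that $M\setminus\mathrm{int}(N)$ is a compact subsurface with boundary. The second gap you do not flag: you need $f(N)\subset\mathrm{int}(N)$, i.e.\ a trapping region. The paper's definition of expansive attractor gives only $\bigcap_{n\ge0}f^n(B_\alpha(A))=A$, not $f(\overline{B_\alpha(A)})\subset B_\alpha(A)$, and it is not automatic that a compact neighborhood $N$ with $f(N)\subset\mathrm{int}(N)$ and $\bigcap f^n(N)=A$ can be extracted from this (the natural candidates $\bigcap_{m=0}^{n_0}f^m(\overline{B_\epsilon(A)})$ fail to be forward-invariant without further argument). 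Your whole permutation step---the claim that $f^{-1}$ maps $\mathcal{D}_{\mathrm{out}}$ into itself and hence that $\mathcal{D}_{\mathrm{in}}$ is $f$-invariant---rests on this forward invariance of $N$. Without it, you only get that every domain's backward orbit eventually enters $\mathcal{D}_{\mathrm{out}}$, and you would then have to fall back on an orbit-counting argument essentially identical to the paper's. So either justify the existence of the attractor block from the paper's definition, or abandon it and use the paper's more elementary argument directly.
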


\begin{proof}
Suppose that $M\setminus A$ has infinitely many complementary domains. Let $\alpha$ be as in the definition of expansive attractor. There is then a complementary domain $D$ with the property that, for all but finitely many $n$, all the points of $f^n(D)$ are within $\alpha$ of $A$. Then there is $x\in D$ that remains within $\alpha$ of $A$ under all iterates of $f$. But then $x\in A$, since $A=\cap_{n=0}^\infty f^n(B_\alpha(A))$.

\end{proof}

We will use the theory of prime ends on several occasions in what follows. Here we set terminology and recall a few basic facts.\\
Suppose that $f:M\to M$ is a homeomorphism of a closed surface $M$ and that $U\subset M$ is a
connected open set, invariant under $f$ with the properties: $M\setminus U$ is nonempty, has finitely many components, and none of these components is a singleton. If $c$ is a closed arc in $M$ with distinct  endpoints $x$ and $y$ such that $c\cap U=c\setminus \{x,y\}$, and $diam(c)$ is sufficiently small, then $c$ separates $U$ into two connected nonempty pieces. Such an arc is called a {\em cross-cut} of $U$. A sequence $\{c_k\}_{k=1}^{\infty}$ of disjoint cross-cuts is called a {\em chain} of
cross-cuts if $diam(c_k)\to 0$ as $k\to \infty$ and $\mathring{c}_{k+1}$ separates $\mathring{c}_k $ from $\mathring{c}_{k+2}$ in $U$ for each $k\in \mathbb{N}$ (here $\mathring{c}$ denotes $c$ less its endpoints).
The chain of cross-cuts $\{c_k'\}$ {\em divides} the chain of cross-cuts $\{c_k\}$ provided for each $k$ there is an $L$ so that $\mathring{c}_{k+1}$ separates $\mathring{c}_l$ from $\mathring{c}_k$ in $U$ for all $l\ge L$. Two chains of cross-cuts are {\em equivalent} if each divides the other. (Since we are requiring that diameters of cross-cuts in a chain go to zero, chains are equivalent if either one divides the other.) A {\em prime end } of $U$ is an equivalence class of cross-cuts and we denote the collection of all prime ends of $U$ by $\partial\hat{U}$. We let
$\hat{U}:=U\cup\partial\hat{U}$ and refer to the points $x\in U\subset \hat{U}$ as {\em prime points}. If $\{c_k\}$ is a chain of cross-cuts representing a prime end $\mathbf{p}$ of $U$ and $W_k$ is the component of $U\setminus c_k$ that contains $\mathring{c}_{k+1}$, let $\hat{W}_k:=W_k\cup \{\mathbf{q}:\mathbf{q}$ is a prime end of $U$ with representing chain of cross-cuts $\{c_l'\}$ such that $\mathring{c}'_l\subset W_k$ for all $l\}$. We topologize $\hat{U}$ by declaring that the sets $\hat{W}_k$ form a neighborhood base at $\mathbf{p}$ and that the sets $V$, $V$ open in $U$, $x\in V$, form a neighborhood base at the prime point $x$. \\
\\
With this topology, $\hat{U}$ is a compact surface with boundary $\partial \hat{U}$ and the {\em induced
map} $\hat{f}:\hat{U}\to \hat{U}$ defined by $\hat{f}(x)=f(x)$ on prime points and $\hat{f}([\{c_k\}])=
[\{f(c_k)\}]$ on prime ends, is a homeomorphism. (See, for example, [Mather].)\\
\\
The {\em principal set} of a prime end $\mathbf{p}$ is the set of all points $x$ with the property that $\{x\}=\lim_{k\to \infty}c_k$ for some chain of cross-cuts $\{c_k\}$ representing $\mathbf{p}$. The principal set is a subcontinuum of the boundary of $U$. If the principal set of $\mathbf{p}$ is a singleton, say $\{p\}$, then $p$ is accessible from $U$ and $\mathbf{p}$ is called an {\em accessible prime end}. If $c$ is a nontrivial continuum in $M$ of small diameter with $c\setminus U=\{x,y\}$, there is a {\em corresponding continuum} $\mathbf{c}$ in $\hat{U}$ with $\mathbf{c}\cap \partial \hat{U}=\{\mathbf{x},\mathbf{y}\}$, $\mathbf{x}$ and $\mathbf{y}$ accessible prime ends with principal sets $\{x\}$ and $\{y\}$, resp., and
$\mathbf{c}\setminus \{\mathbf{x},\mathbf{y}\}=c\setminus \{x,y\}$. Moreover, $\mathbf{c}$ has small diameter (that is, as $diam(c)\to 0$, $diam(\mathbf{c})\to 0$ in any compatible metric on $\hat{U}$).
\begin{prop}\label{ratrot} Let $A\subset M$ be a transitive expansive attractor of a homeomorphism $f:M\to M$ and let $U$ be a complementary domain of $A$. There are then finitely many periodic points $p_1,\ldots,p_n$
in $A$ accessible from $U$, two of the branches of $W^u(p_i)$ consist entirely of points accessible 
from $U$ for each $i$, and every point of $A$ accessible from $U$ lies on one of these branches. 
\end{prop}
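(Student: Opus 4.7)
The plan is to study the dynamics on the prime end compactification $\hat U$ of $U$, identify the accessible periodic points with fixed prime ends on $\partial \hat U$ (after passing to a suitable iterate of $f$), and identify the accessible branches of $W^u(p_i)$ with the arcs of $\partial\hat U$ between consecutive fixed prime ends.

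First I reduce to the invariant case. By Proposition~\ref{finite_complements}, some power of $f$ maps $U$ to itself. Since $\partial\hat U$ is a finite disjoint union of circles, a further power makes $\hat f$ fix each boundary circle $C$. The conclusions of the proposition are preserved under replacement of $f$ by a power, so I may assume $f(U)=U$ and each boundary circle of $\hat U$ is $\hat f$-invariant.

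The main step, and the main obstacle, is to show that the circle homeomorphism $\hat f|_C$ has rational rotation number for each boundary circle $C$. The idea is that expansiveness of $f$ on $A$ rules out irrational rotation. Accessible prime ends are dense on $C$ by classical prime end theory, so if $\rho(\hat f|_C)$ were irrational, one could choose an accessible prime end $\mathbf x$ with principal set $\{x\}$ and a small cross-cut $c$ of $U$ through $\mathbf x$ with the other endpoint $y\in A$ and $d(x,y)<\alpha/2$; the corresponding continuum $\mathbf c$ in $\hat U$ spans an arc $J\subset C$ of some circular length $\ell>0$, and the iterates $\hat f^n(J)$ all have circular length $\ell$, while by expansiveness $d(f^n(x),f^n(y))>\alpha$ for infinitely many $n$. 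Using the local product structure (Theorem~\ref{productstructure}) at accumulation points of the orbits of $x$ and $y$ in $A$, together with the uniform bound on circular arc length of the iterated cross-cuts, one should derive a contradiction via a compactness argument in $M$: pairs of accessible prime ends at a fixed circular distance on $C$ must have principal sets at $M$-distance controlled by the local product structure, but the iterated endpoints force this $M$-distance to be unbounded. This conversion from prime end data to $M$-metric data is the most delicate part of the argument.

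Once the rotation number is rational, a further power of $f$ makes $\hat f$ fix finitely many prime ends on each $C$. Each fixed accessible prime end has an $f$-invariant principal set, which by expansiveness must be a single fixed point $p_i\in A$: a nontrivial $f$-invariant continuum of small diameter in $A$ contains pairs of distinct points that are not separated by any iterate of $f$, contradicting expansiveness. This gives the finite list of accessible periodic points. Each arc of $C$ between consecutive fixed prime ends is $\hat f$-invariant with monotone interior dynamics. By the prong lemma (Lemma~\ref{prongs}) and the local product structure at $p_i$, the two arcs of $C$ adjacent to $\mathbf p_i$ are locally expanding at $\mathbf p_i$ and trace out two branches of $W^u(p_i)$ emanating from $p_i$. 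Hence two branches of $W^u(p_i)$ consist entirely of accessible points of $A$ from $U$, and since every accessible point of $A$ from $U$ corresponds to an accessible prime end on $\partial\hat U$ lying on some such arc, every accessible point lies on one of these unstable branches.
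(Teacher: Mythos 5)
Your overall framework (prime end compactification, reduce to rotation number of $\hat f$ on $\partial\hat U$, then analyze fixed prime ends) matches the paper's, but there are two genuine gaps in your sketch.

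First, the rational rotation number step is wrong as stated. You claim that the iterates $\hat f^n(J)$ of an arc $J\subset\partial\hat U$ ``all have circular length $\ell$.'' A circle homeomorphism with irrational rotation number is not an isometry, and need not even be conjugate to a rotation (Denjoy examples), so iterates of an arc can shrink and expand arbitrarily. This kills the compactness argument you then sketch, because you no longer have a uniform lower bound on the prime-end size of $\hat f^n(J)$ to play against the $M$-metric blowup. The paper avoids this issue entirely: it takes a \emph{recurrent} prime end $\mathbf p$ with defining cross-cuts $c_k$ and splits into two cases according to whether the endpoints $x_k, y_k$ of $c_k$ lie on the same unstable arc of $A$. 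In the first case one produces either a fixed point of $f^{n_i}$ on $\gamma$, or an $f^{n_i}$-invariant circle in $A$ (impossible by expansivity), or a half-open arc in $\partial\hat U$ whose open endpoint is a fixed prime end. In the second case one builds a short stable continuum $\bar c_k$ joining $x_k$ to a point $z_k\in W^u_{\alpha/2}(y_k)$ using Lemma~\ref{prongs}; the corresponding continuum in $\hat U$ cuts off an arc $\beta_k\subset\partial\hat U$ around $\mathbf p$, and since $diam(f^{n_i}(\bar c_k))\to 0$, eventually $\hat f^{n_i}(\beta_k)\subset\beta_k$, producing a periodic prime end. This is a structurally different, and correct, route to rationality.

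Second, you do not establish that there are only finitely many fixed prime ends on $\partial\hat U$, nor do you account for the \emph{inaccessible} fixed prime ends that necessarily appear. The paper shows that every fixed prime end is either repelling on $\partial\hat U$ (the accessible ones, with singleton principal set an accessible fixed point $p_i$, and with the cross-cut endpoints lying on two distinct branches of $W^u(p_i)$) or attracting on $\partial\hat U$ (the inaccessible ones, detected again via a stable continuum whose iterates shrink). This dichotomy is what gives finiteness, and the alternation of attracting/repelling fixed points is exactly what lets one conclude that the intervals of accessible prime ends on $\partial\hat U$ are the unstable manifolds of the repelling fixed prime ends, i.e., correspond to two whole branches of $W^u(p_i)$ consisting of accessible points. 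Your statement that each fixed accessible prime end ``has an $f$-invariant principal set, which by expansiveness must be a single fixed point'' is confused: the principal set of an accessible prime end is a singleton by definition, and the expansiveness argument you invoke (that a small invariant continuum contradicts expansiveness) is false in general -- $A$ itself is an invariant continuum. The expansiveness argument is needed, but in a different place (ruling out an invariant circle in $A$), and the attracting/repelling analysis, which you omit, is where the real content lies.
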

\begin{proof} Assume for simplicity that $f(U)=U$. Let $\hat U$ be the surface of prime points and prime ends
associated with $U$ and let $\hat f:\hat U\rightarrow \hat U$ be the homeomorphism induced by $f$. By passing to a power of $f$, we may assume that each component of $\partial \hat{U}$ is invariant under
$\hat{f}$ and that $\hat{f}$ is orientation preserving on each component of $\partial \hat{U}$. To further simplify notation we suppose that $\partial \hat{U}$ is connected.\\
 We
first prove that the map $\hat f\mid _{\partial \hat U}:\partial \hat U\rightarrow \partial \hat U$ on 
the circle of prime ends has rational rotation number.
\\
Let $\mathbf{p}$ be a prime end
in $\partial \hat U$ that is recurrent under $\hat f$: say $\hat f^{n_i}(\mathbf{p})\to 
\mathbf{p}$ as $i\to \infty$. Let $\{c_k\}$ be a sequence of cross-cuts of $U$ defining $\mathbf{p}$
with $diam(c_k)\to 0$. Let $x_k$ and $y_k$ be the endpoints of $c_k$. Then $x_k$ and $y_k$ are accessible
from $U$ and are nonsingular for large $k$. If  $x_k$ and $y_k$ are on the same unstable arc $\gamma \subset A$ (with endpoints  $x_k$ and $y_k$), then $\gamma$ consists entirely of points accessible from $U$ one of which, say $p$, determines $\mathbf{p}$. Then, for large $i$, $f^{n_i}(p)\in \gamma$. If  $f^{n_i}(\gamma)\subset \gamma$ or $f^{n_i}(\gamma)\supset \gamma$ then $f^{n_i}$ fixes a point in $\gamma$
that is accessible from $U$ and $\hat f$ has rational rotation number on $\partial \hat U$. Otherwise, let $\hat{\gamma}$ be the arc corresponding to $\gamma$ on $\partial \hat U$. Then either $\cup_{m=1}^{\infty}\hat{f}^{mn_i}(\hat{\gamma})=\partial \hat U$, in which case $\cup_{m=1}^{\infty}\hat{f}^{mn_i}(\gamma)$ is a topological circle in $A$ and invariant under $f^{n_i}$, which is ruled out by expansivity, or $\cup_{m=1}^{\infty}\hat{f}^{mn_i}(\hat{\gamma})$ is a half-open arc in $\partial \hat U$ whose open endpoint is fixed by 
$\hat{f}^{n_i}$. In the latter case also $\hat f$ has rational rotation number on $\partial \hat U$. 
\\
We may assume then that $x_k$ and $y_k$ are not on the same unstable arc. By considering the local product structure in a neighborhood of an accumulation point of $\{c_k\}$ and
using Lemma ~\ref{prongs}, we see that for large $k$ there is a continuum $\bar c_k \subset W^s_{\alpha/2}(x_k)$ with $\bar c_k \cap A=\{x_k,z_k\}$, $\bar c_k \backslash \{x_k,z_k\} \subset U$, and $z_k \in W^u_{\alpha/2}(y_k)$. The corresponding continuum $\bar{ \mathbf{c}}_k$ 
in $\hat U$ intersects $\partial \hat U$ in
accessible prime ends $\mathbf{x}_k$ and $\mathbf{z}_k$ which are endpoints of a nested sequence
of arcs $\mathbf{\beta}_k \subset \partial \hat U$ with $\cap \mathbf{\beta}_k= \{\mathbf{p}\}$. Since
$diam(f^{n_i}(c_k))\to 0$ as $i\to \infty$, $diam(\hat {f}^{n_i}(\bar {\mathbf{c}}_k))\to 0$ as $i\to \infty$ as well so
that $\hat {f}^{n_i}(\bar {\mathbf{c}}_k)\to \{\mathbf{p}\}$. But then $\hat {f}^{n_i}(\mathbf{\beta}_k)\subset \mathbf{\beta}_k$ for some $i$. Thus $\hat f$ has a periodic prime end and hence a rational rotation number.
\\
Passing to a power, we may assume that the rotation number of $\hat f$ on $\partial \hat U$ is $0$.
Let $\mathbf{p}$ be a prime end in $\partial \hat U$ fixed by $\hat f$ and let $\{c_k\}$ be a sequence
of cross-cuts defining $\mathbf{p}$. If the endpoints $x_k$ and $y_k$ of $c_k$ are on the same 
unstable arc $\alpha$ in $A$ then $\mathbf{p}$ is accessible from $U$, is repelling on $\partial \hat U$
under $\hat f$, and corresponds to an accessible point $p\in \alpha$ with $x_k$ and $y_k$ on distinct
branches of $W^u(p)$. If $x_k$ and $y_k$ are not on the same unstable arc, consider the stable continuum $\bar{\mathbf{c}}_k\subset \hat U$ constructed above. Since $\mathbf{p}$ is fixed by $\hat f$
and $diam(\hat{f}^n(\bar{\mathbf{c}}_k)\to 0$ as $n\to \infty$, $\mathbf{p}$ is attracting on $\partial \hat U$.
\\
Since every prime end fixed by $\hat f$ is either attracting or repelling, there are only finitely many of them and every non-attracting fixed prime end is on the unstable manifold (with respect to $\hat f$ on 
$\partial \hat U$) of a fixed repelling prime end. We saw above that the local unstable manifold of the 
fixed repelling prime ends consists of accessible prime ends associated with accessible points on two branches of the unstable manifold of an accessible fixed point. It follows that the global unstable manifold  of a fixed repelling prime end also has this property. 
\end{proof}
Given $f:M\to M$ with expansive attractor $A$, we would like to embed $A$ in a surface $M'$ so that
all components of $M'\backslash A$ are topological disks, and so that $f|_A$ extends to a homeomorphism $f':M'\to M'$ having $A$ as attractor. This would be straightforward were it true
that $\cup_{x\in A}W^s_{\epsilon}(x)$ is a neighborhood of $A$ for every $\epsilon>0$. However, we are unable to verify that this is the case (see Question~1.3 in \cite{herher}) and we are forced into the more elaborate procedure below (Lemma~\ref{step0}) that requires modification of $f$ arbitrarily near $A$. This
is one step in a construction that will produce (Proposition~\ref{step3}) an equivariant upper semi-continuous decomposition
of an expansive attractor $A'$ into a surface $\tilde{M}$ with the induced $\tilde{f}:\tilde{M}\to \tilde{M}$  pseudo-Anosov. The attractor $A$ is a quotient of $A'$ whose only nontrivial fibers are over
the finitely many periodic points in $A$ that are accessible by two or more inequivalent directions from
$M\backslash A$.

\begin{lemma}\label{step0}
Let $A\subset M$ be a transitive expansive attractor of a homeomorphism $f:M\to M$. There exist a surface $M_1$, a homeomorphism $f_1:M_1\ra M_1$, and a transitive expansive attractor $A_1\subset M_1$ such that each component of $M_1\setminus A_1$ is a topological disk and $f|_A$ is conjugate to $f_1|_{A_1}$.
\end{lemma}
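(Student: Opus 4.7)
The plan is to work one complementary domain at a time. By Proposition~\ref{finite_complements}, $M\setminus A$ has only finitely many components $U_1,\ldots,U_N$; after replacing $f$ by a suitable power, each $U_i$ is $f$-invariant, and, applying Proposition~\ref{ratrot}, each boundary circle $C$ of the prime end compactification $\hat{U}_i$ is $\hat{f}$-invariant with rotation number $0$. The fixed prime ends on each such $C$ are then accessible and correspond to accessible periodic points $q_1,\ldots,q_m$ of $f$ in $A$, each with two distinguished accessible branches of $W^u(q_l)$.

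For each boundary circle $C$ of each $\hat{U}_i$, I would construct a simple closed curve $\sigma_C$ in $U_i$ encircling the end of $U_i$ corresponding to $C$, so that $\sigma_C$ bounds in $\hat{U}_i$ an annular region $R_C$ between $\sigma_C$ and $C$. The curve $\sigma_C$ would be built from short local stable arcs through small neighborhoods of each fixed accessible periodic point $q_l$ (separating adjacent accessible prongs, by Lemma~\ref{prongs}), connected inside $U_i$ by short cross-cuts chosen close to the accessible unstable branches identified in Proposition~\ref{ratrot}. By taking the stable arcs short, $\sigma_C$ may be forced into any prescribed neighborhood of $A$.

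The new surface $M_1$ is then obtained by excising from $M$ the ``topologically complicated core'' of each $U_i$ bounded away from $A$ by the curves $\sigma_C$, and gluing a standard closed $2$-cell $D_C$ to each exposed boundary curve $\sigma_C$. In $M_1$, the complementary domain of $A$ corresponding to $C$ is the topological disk $R_C\cup D_C$ attached along its $A$-frontier. The new homeomorphism $f_1$ agrees with $f$ on the retained set $A\cup\bigsqcup_{i,C} R_C$, after a preliminary isotopy of $f$ supported in a small neighborhood of each $\sigma_C$ that arranges $f(\sigma_C)=\sigma_C$; inside each glued $2$-cell $D_C$, $f_1$ is extended as a standard radial map with a single repelling fixed point at the interior center and boundary dynamics matching $f|_{\sigma_C}$. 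Setting $A_1:=A\subset M_1$ gives $f_1|_{A_1}=f|_A$, so the required conjugacy is simply the identity on $A$; expansiveness survives with the same constant $\alpha$ on a trapping neighborhood obtained from $B_\alpha(A)$ by restricting to the retained set and fattening by small collars of $\sigma_C$ in the glued disks.

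The main obstacle is the construction of $\sigma_C$ and its $f$-invariance. Because most prime ends of $C$ are non-accessible with possibly non-trivial principal continua in $A$, one cannot simply lift $C$ to a curve in $A$; instead $\sigma_C$ must be pieced together out of the finitely many accessible stable/unstable arcs supplied by Proposition~\ref{ratrot} and Lemma~\ref{prongs}, and closed up into a single simple closed curve via a global combinatorial analysis of the cyclic order of the fixed prime ends on $C$ together with the product charts of Theorem~\ref{productstructure} at each $q_l$. The supporting isotopy of $f$ needed to achieve $f(\sigma_C)=\sigma_C$ is precisely the ``modification of $f$ arbitrarily near $A$'' that the authors flag at the start of this section. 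Once $\sigma_C$ and the isotopy are in hand, verifying that $M_1$ is a closed $2$-manifold, $f_1$ a homeomorphism, and $A_1$ an expansive attractor for $f_1$ reduces to local compatibility checks via Theorem~\ref{productstructure}.
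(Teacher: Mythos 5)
Your high-level strategy (chop off the topologically complicated core of each non-disk complementary domain and cap with a disk, keeping $A$ and $f|_A$ fixed) is the same as the paper's, and you correctly assemble the setup from Propositions~\ref{finite_complements} and~\ref{ratrot} and Lemma~\ref{prongs}. But the implementation is where the real work lies, and there you differ from the paper in a way that leaves two genuine gaps, which you partly flag but do not close.

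First, the paper never constructs an explicit simple closed curve $\sigma_C$ in $M$. Instead, for each boundary circle $\Delta$ of $\partial\hat U$ it builds, around each attracting inaccessible prime end $\mathbf p_i$, the sub-domain $U_{\Delta,i}=\bigcup_{t\ge 0}g^{-t}(V_{i,K})$ where $V_{i,K}$ is a side of a cross-cut chosen so that $g(V_{i,K})\subset V_{i,K}$. The new boundary circle $\Gamma$ is then assembled from arcs $\Gamma_i$ of the prime-end circles $\partial\hat U_{\Delta,i}$ joined at the prime ends $\hat p_i^{\pm}$ over the accessible fixed points. This avoids exactly the difficulty you name: near each $\mathbf p_i$ the two relevant accessible unstable branches are only prime-end asymptotic with a nontrivial principal continuum between them, so ``closing up'' an actual curve there by ``short cross-cuts chosen close to the accessible unstable branches'' is precisely the hard step, and you leave it unresolved. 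The prime-end circles exist for free, which is why the paper uses them.

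Second, your isotopy step is both unjustified and, in the form you state it, inconsistent with the conclusion you want. You arrange $\sigma_C\subset B_\alpha(A)$, then isotope $f$ so that $f_1(\sigma_C)=\sigma_C$, and then claim a trapping neighborhood $V$ containing $\sigma_C$ works ``with the same constant $\alpha$.'' But an $f_1$-invariant circle $\sigma_C\subset V$ gives $\sigma_C\subset\bigcap_{n\ge 0}f_1^n(V)$, so $\bigcap_{n\ge 0}f_1^n(V)\ne A$ and $A_1$ is not an attractor for that $V$. The paper avoids this by never making a curve near $A$ invariant: the sets $U_{\Delta,i}$ satisfy $g(V_{i,k})\subset V_{i,k}$ strictly, and the only new invariant object is $\partial D_\Delta=\Gamma$, which lives in the freshly glued disk where one is free to choose the metric and the extension of $f_1$ so that $D_\Delta$ lies outside any chosen trapping neighborhood of $A$ and the fixed points of $\hat f$ on $\Gamma$ are attracting from inside $D_\Delta$. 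So either your $\sigma_C$ should not be made invariant (rather, $f(\sigma_C)$ should be pushed to the core side so the capped disk traps nothing near $A$), or you must drop the ``same constant $\alpha$'' claim and choose a metric on $M_1$ in which the glued disks are uniformly far from $A$. You also conflate the expansivity constant with the trapping constant: expansivity of $f_1|_{A_1}$ is automatic since $f_1|_{A_1}=f|_A$, but the trapping property needs a separate argument.

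In short: same strategy, but the paper's prime-end mechanism is not an optional convenience; it is what produces an $f$-equivariant cut without needing to conjure an explicit invariant curve in $M$, and it is what keeps the new invariant circle safely inside the glued disk rather than in a neighborhood of $A$.
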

\begin{proof}
Suppose that $U$ is a complementary domain of $A$ in $M$ that is not an open topological disk
and choose $n>0$ so that $f^n(U)=U$ and $\hat{f}^n(\mathbf{p})=\mathbf{p}$ for every periodic prime end in $\partial \hat{U}$. Let $\Delta$ be a component of $\partial \hat{U}$, let $g=f^n$ and let $\mathbf{p}_0,\ldots,\mathbf{p}_{m-1}$ be the fixed (under $\hat{g}$) inaccessible prime ends in $\Delta$, listed in cyclic order (see Proposition ~\ref{ratrot}).
Then each $\mathbf{p}_i$ is an attracting (on $\Delta$) fixed point of $\hat{g}$. Since each $\mathbf{p}_i$ is inaccessible, its continuum of principal points, $\mathcal{P}(\mathbf{p}_i)\subset A$, is nontrivial. As $g$ is expansive on $A$, not all points of $\mathcal{P}(\mathbf{p}_i)$ are fixed by $g$. For each
$i\in \{0,\ldots,m-1\}$ let $q_i\in \mathcal{P}(\mathbf{p}_i)$ be a principal point of $\mathbf{p}_i$ that is not fixed by $g$ and let $\{c_{i,k}\}$ be a chain of cross-cuts defining $\mathbf{p}_i$ such that $c_{i,k}\to \{q_i\}$ in the Hausdorff topology, as $k\to \infty$. Since each $\mathbf{p}_i$ is attracting on $\Delta$, the endpoints of 
$c_{i,k}$, as accessible prime ends, move towards $\mathbf{p}_i$ under $\hat{g}$ (at least for large $k$). Since $g(q_i)\ne q_i$, $g(c_{i,k})\cap c_{i,k}=\emptyset$ for all large $k$. It follows that if
$V_{i,k}$ is the component of $U\backslash c_{i,k}$ containing $\mathring{c}_{i,k+1}$, then $g(V_{i,k})\subset V_{i,k}$ for all $i$ and $k\ge K$ , some $K$. Furthermore, since $M$ is compact, we may take $K$ large enough so that $V_{i,K}$ is a topological disk. For $i=0,\ldots,m-1$, let $U_{\Delta,i}:=\cup_{t\ge 0}g^{-t}(V_{i,K})$. Then $U_{\Delta,i}$ is an open topological disk with circle of prime ends $\partial \hat{U}_{\Delta,i}$. 
\\
The prime ends in $\Delta$ fixed by $\hat{g}$ alternate, around $\Delta$, between repelling prime ends $\hat{p}$ associated with accessible points $p$ and attracting inaccessible prime ends
$\mathbf{p}$. Let us index the accessible prime ends so that the fixed points of $\hat{g}$ occur 
in cyclic order as ...,$\hat{p}_i$, $\mathbf{p}_i$, $\hat{p}_{i+1}$, $\mathbf{p}_{i+1}$,... , with subscripts taken mod($m$). The points $p_i$ and $p_{i+1}$ are also accessible from $U_{\Delta,i}$ and so determine prime ends in $\partial \hat{U}_{\Delta,i}$ which we denote by $\hat{p}_i^+$ and $\hat{p}_{i+1}^-$, resp. Let $\Gamma_i$ be the closed arc in $\partial \hat{U}_{\Delta,i}$ with endpoints $\hat{p}_{i+1}^-$ and $\hat{p}_i^+$ that does not contain $\mathbf{p}_i$. We construct a circle $\Gamma$ from the union of the $\Gamma_i$ by identifying $\hat{p}_{i+1}^-$ in $\Gamma_i$ with $\hat{p}_{i+1}^+$ in
$\Gamma_{i+1}$, indices taken mod($m$). Finally, we glue $\Gamma$ to the boundary of a disk $D_{\Delta}$ so that $\partial D_{\Delta}=\Gamma$. Let $U_{\Delta}:=D_{\Delta}\cup(\cup_{i=0,\ldots,m-1}
(\hat{U}_{\Delta,i}\backslash \{\mathbf{p}_i\})$. $U_{\Delta}$ is, topologically, a closed disk with $m$ points removed from its boundary.
\\
Let $\tilde{U}$ be the disjoint union of the $U_{\Delta}$, $\Delta$ a component of $\partial \hat{U}$. 
Let $M'$ be the surface obtained by gluing $\tilde{U}$ onto $M\backslash U$ in the natural way.
The complement of $A$ in $M'$ has one fewer non-contractible components than did the complement
of $A$ in $M$.
\\
Now iterate the above process until every complementary domain of $A$ in the final surface, call it $M_1$, is a topological disk. Note that a map $f_1$ extending $f$ on $A$ is naturally defined everywhere on $M_1$ except on the disks $D_{\Delta}$. If $\mathbf{x}\in \partial D_{\Delta}$, then $\mathbf{x}$ is a
prime end associated with a domain $U_{\Delta,i}$. There is then a well-defined prime end $\hat{f}(\mathbf{x})$ associated with a domain $V_{\hat{f}(\Delta),j}=f(U_{\Delta,i})$ where $V=f(U)$ and
$\hat{f}(\mathbf{p}_i)=\mathbf{q}_j$, $\mathbf{q}_j$ an inaccessible prime end in $\hat{f}(\Delta)\subset
\partial \hat{V}$; let $f_1(\mathbf{x}):=\hat{f}(\mathbf{x})$. Since $A$ attracts all points in a neighborhood of itself,
points $y$ accessible from $U_{\Delta,i}$ near $p_i$ (or near $p_{i+1}$) whose associated prime
ends $\hat{y}$ are on $\Gamma_i$ must be in $W^s(p_i)$ (resp., $W^s(p_{i+1}$)). Then for such $y$,
$f_1^{kn}(\hat{y})\to \hat{y}^+_i$ (resp. $\hat{y}_{i+1}^-$) on $\Gamma_i$ as $k\to \infty$. Thus the
periodic points $\hat{y}_i^{\pm}$ are attracting on $\partial D_{\Delta}$ under $f_1^n$. We thus may extend to a homeomorphism $f_1:D_{\Delta}\rightarrow D_{f(D_{\Delta})}$, for all $\Delta$, so that 
the orbits of the $\hat{p}_{i+1}^-\equiv \hat{p}_{i+1}^+$ are locally attracting in $\cup_{\Delta}D_{\Delta}$.
Now $f_1:M_1\rightarrow M_1$ is a homeomorphism of the surface $M_1$ with expansive attractor
$A\subset M_1$ and $f_1|_A=f|_A$. Moreover, each component of $M_1\backslash A$ is a topological
disk.
\end{proof}

\section{The Upper Semi-continuous Decomposition of A}\label{usc}
To demonstrate that a transitive expansive attractor is derived from pseudo-Anosov we will describe
how to `zip up' adjacent branches of unstable manifolds of accessible periodic points. The following proposition gives the zipping recipe ($x$ is to be identified with $o(x)$) for non-periodic accessible points.

\begin{prop}\label{step1a}
Let $A\subset M$ be a transitive expansive attractor of a homeomorphism $f:M\to M$ with constant $\alpha$.  Let $x\in A$ be a non-periodic point accessible from the complementary domain $U$.  Then $x$ lies on an unstable branch of an accessible periodic point $p$ and there exits a unique point $o(x)$ lying on an unstable branch of an adjacent accessible periodic point for which there is a continuum $c(x)$ such that:
\begin{enumerate}
\item $\{x,o(x)\}\subset c(x)$;
\item $c(x)\backslash \{x,o(x)\}\subset U$; and
\item $f^n(c(x))\subset W^s_{\alpha/2}(f^n(x))$ for sufficiently large $n$.
\end{enumerate}
Furthermore, the assignment $x\mapsto o(x)$ is continuous.
\end{prop}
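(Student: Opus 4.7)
The plan is to construct $o(x)$ and $c(x)$ by following the local stable continuum at $x$ across $U$ until it first re-encounters $A$, an idea modeled on the stable continua $\bar c_k$ built in the proof of Proposition~\ref{ratrot}.

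First I would locate $x$ on an unstable branch. Since $x$ is accessible from $U$ and non-periodic, Proposition~\ref{ratrot} places $x$ on a branch $B$ of $W^u(p)$ for some accessible periodic point $p\in A$. Because the singular set of $A$ is finite and $f$-invariant, every singularity is periodic, so the non-periodic $x$ is regular, with $n(x)=2$ in Theorem~\ref{productstructure}. Fix a product chart $\phi:\mathbb{B}\to M$ at $x$ with $\phi^{-1}(x)=(0,0)$ and $\phi^{-1}(A)=\mathbb{R}\times(C_0\cup(-C_1))$. Since $0$ is a limit point of $C_k$ whenever $C_k$ is a Cantor set, accessibility of $x$ from $U$ forces one of $C_0,C_1$ to be $\{0\}$ (Theorem~\ref{productstructure}(ii)); after possibly reflecting, say $C_0=\{0\}$, so $\phi^{-1}(U)$ contains the open upper half of $\mathbb{B}$. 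By Lemma~\ref{prongs}, the local stable continuum $CW^s_\epsilon(x)\cap\bar B_\delta(x)$ pulls back to a vertical sub-arc through the origin, whose upper prong $\sigma^+$ enters $U$.

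Next, I construct $c(x)$ and $o(x)$. Choosing $\epsilon<\alpha/2$, I extend $\sigma^+$ along the component of $W^s_\epsilon(x)$ containing $\sigma^+\setminus\{x\}$ until it first re-meets $A$ at a point $o(x)$, and let $c(x)$ be the resulting subcontinuum from $x$ to $o(x)$. Existence of $o(x)$ mirrors the construction of the continua $\bar c_k$ in Proposition~\ref{ratrot}: accessible points $x_k\to x$ along $B$ pair, via the cyclic order on $\partial\hat U$, with accessible points $y_k$ on the neighboring branch $B'$ of an adjacent accessible periodic point $p'$, and the associated stable continua $\bar c_k\subset W^s_{\alpha/2}(x_k)$ converge in the Hausdorff topology to a continuum $c(x)$ with $c(x)\cap A=\{x,o(x)\}$ and $o(x)\in B'$. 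Properties (1) and (2) are then immediate; for (3), $c(x)\subset W^s_\epsilon(x)$ directly gives $f^n(c(x))\subset W^s_\epsilon(f^n(x))\subset W^s_{\alpha/2}(f^n(x))$ for every $n\ge 0$.

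Uniqueness of $o(x)$ follows from the same local picture: any $c'$ satisfying (1)--(3) must lie in $W^s_{\alpha/2}(x)$, hence near $x$ coincides with the vertical axis of the chart; since $c'\setminus\{x,o'\}\subset U$ pins the direction to the unique side of $B$ in $U$ and $o'$ is the first $A$-point reached along $c'$, we conclude $o'=o(x)$. The construction is $f$-equivariant and involutive (by uniqueness applied at $o(x)$), so $o(x)$ is non-periodic, and Proposition~\ref{ratrot} then puts $o(x)$ on the unstable branch of the adjacent periodic point $p'$.

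Finally, for continuity of $x\mapsto o(x)$: if $x_n\to x$ with each $x_n$ non-periodic and accessible from $U$, pass to a subsequence so that $o(x_n)\to z$ and $c(x_n)\to C$ in Hausdorff topology. Then $C\supset\{x,z\}$ is a continuum, $C\setminus\{x,z\}\subset\overline{U}$, and (3) passes to the limit. The product chart at $x$ applies uniformly to nearby $x_n$, so the $c(x_n)$ converge onto the local vertical stable segment at $x$; this gives $C\setminus\{x,z\}\subset U$ and $z$ as the first re-entry of this segment into $A$, and uniqueness forces $z=o(x)$. The principal technical obstacle is ruling out collapse ($z=x$) or overshoot ($z$ strictly past $o(x)$); both are excluded by the uniform $\delta$ of Lemma~\ref{prongs} and the continuous dependence of the product chart on the basepoint at regular points.
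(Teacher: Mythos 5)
Your construction has a genuine gap at its central step: you declare that the stable prong $\sigma^+$ entering $U$ can be ``extended until it first re-meets $A$ at a point $o(x)$,'' but nothing in the hypotheses guarantees that the local stable set of $x$, followed into $U$, ever returns to $A$. The prong $\sigma^+$ is a piece of $CW^s_\epsilon(x)$ near $x$, which a priori terminates at the boundary of a $\delta$-ball without touching $A$ again, and extending it along the global stable set $\bigcup_n f^{-n}(W^s_\epsilon(f^n(x)))$ does not by itself produce a compact continuum that crosses $U$. Your appeal to ``mirroring the construction of the continua $\bar c_k$ in Proposition~\ref{ratrot}'' is circular here: those $\bar c_k$ were themselves obtained by passing to a product neighborhood of an accumulation point of the cross-cuts (a forward-recurrence argument), which is precisely the mechanism you are trying to avoid. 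The paper's proof resolves this by iterating forward: choosing $n_i\to\infty$ with $f^{n_i}(x)\to q$, the points $f^{n_i}(x)$ visit infinitely many leaves of a product chart at $q$, so by Lemma~\ref{prongs} for some $i$ the set $W^s_{\alpha/2}(f^{n_i}(x))$ reaches leaves on both sides of the leaf of $f^{n_i}(x)$; one then takes the component of $W^s_{\alpha/2}(f^{n_i}(x))\cap \mathrm{cl}(U\cap N^*)$ through $f^{n_i}(x)$ and pulls it back by $f^{-n_i}$.

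This matters for item (3) as well. You claim $c(x)\subset W^s_\epsilon(x)$ so that $f^n(c(x))\subset W^s_{\alpha/2}(f^n(x))$ for all $n\ge 0$; but the statement only promises this for sufficiently large $n$, and that weaker form is exactly what the pull-back produces (the inclusion holds once $n\ge n_i$). If you could arrange $c(x)\subset W^s_\epsilon(x)$ unconditionally you would not need to push forward at all, which is the tell that something is being assumed rather than proved. Finally, your deduction that $o(x)$ lies on an \emph{adjacent} branch by ``Proposition~\ref{ratrot}'' skips a real step: the paper shows separately that $o:B\to B'$ with $B'\ne B$ (a continuous involution of a ray without fixed points cannot map $B$ to itself) and that $B'$ is adjacent because the corresponding prime-end arcs shrink. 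These points need arguments, not citations. The uniqueness and continuity sketches are directionally right but, as you note yourself, the exclusion of collapse/overshoot is precisely where the paper invokes expansivity via forward iteration; saying it is ``excluded by the uniform $\delta$ of Lemma~\ref{prongs}'' does not yet give a proof.
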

\begin{proof}
 To simplify, assume
that the complementary domain $U$ is invariant under $f$ and that the periodic points in $A$ that are accessible from $U$
(which exist, according to Proposition ~\ref{ratrot}) are fixed by $f$. Assume also that each unstable branch of each accessible fixed point is invariant under $f$. Choose an $x\in A$ accessible from $U$, $x$
not fixed by $f$. Then the orbit of $x$ is contained in an unstable branch $B$ of an accessible fixed point $p$
(Proposition~\ref{ratrot}). Let $n_i\to \infty$ and $q\in A$ be such that $f^{n_i}(x)\to q$. In a product (or singular product) neighborhood $N$ of $q$ in $A$, infinitely many of the leaves of $N$ are visited by
$f^{n_i}(x)$. It follows from Lemma ~\ref{prongs} that for some $i$, $W^s_{\alpha/2}(f^{n_i}(x))$ meets leaves of
$N$ on both sides of the leaf containing $f^{n_i}(x)$. There is then a continuum $c(f^{n_i}(x))\subset W^s_{\alpha/2}(f^{n_i}(x))$ that  meets $N$ in points $f^{n_i}(x)$ and $y$, with $y\ne f^{n_i}(x)$ and $y$ accessible from $U$,
and with $c(f^{n_i}(x))\backslash \{f^{n_i}(x),y\} \subset U$. (Specifically, take $c(f^{n_i}(x))$ to be the component containing 
$f^{n_i}(x)$ in the intersection of  $W^s_{\alpha/2}(f^{n_i}(x))$ with $cl(U\cap N^*)$, $N^*$ the ambient
neighborhood corresponding to $N$ - see Theorem ~\ref{productstructure}.) Let $c(x):=f^{-n_i}(c(f^{n_i}(x)))$ and
$o(x):=f^{-n_i}(y)$.
\\
Thus, for each $x\in B$ there is a point $o(x)\ne x$ accessible from $U$ for which there is a continuum
$c(x)$ with the properties: $\{x,o(x)\}\subset c(x)$; $c(x)\backslash \{x,o(x)\}\subset U$; and
$f^n(c(x))\subset W^s_{\alpha/2}(f^n(x))$ for some $n$. Suppose that $x'$ is another point
accessible from $U$ and $c'$ a continuum with:  $\{x,x'\}\subset c'$; $c'\backslash \{x,x'\}\subset U$; and
$f^m(c')\subset W^s_{\alpha/2}(f^m(x))$ for some $m$. Then $diam(f^k(c'\cup c(x)))\to 0$ as $k\to \infty$
and by considering a product (or singular product) neighborhood of an $\omega$-limit point of $x$,
we see that, for some large $k$, $f^k(x')$ and $f^k(o(x))$ are on the same leaf in the product neighborhood and as close as desired, so that $f^k(x')\in W^u_{\alpha}(o(x))$, and also $f^k(x'),o(x)\in
W^s_{\alpha/2}(f^k(x))$. Expansivity implies that $f^k(x'))=f^k(o(x))$ and $x'=o(x)$. That is, the point
$o(x)$ is well-defined, independent of the details of the construction. Thus $o(o(x))=x$ for all accessible, non-fixed $x$. 
\\
It is not hard to see that $o$ is continuous: consider $x$ well inside a product neighborhood $N$
with $c(x)$ in the ambient neighborhood $N^*$ and $x_n\to x$, strictly monotonically from one side, on the same leaf of $N$ as $x$. We may assume that the channel in $N^*$ between the leaves $L$ and $L'$ of $N$ containing $x$ and $o(x)$, resp., is so narrow that $c(x_n)\subset N^*$ and $o(x_n)\in L'$. Now $c(x_n)\cap( c(x_m)\cup c(x))=\emptyset $ for $n\ne m$ (otherwise, $o(x_n)$ is ambiguous) so the sequence $o(x_n)$ converges monotonically on $L'$ to some $y$. If $y\ne o(x)$ then $y$ separates $o(x)$ from all $o(x_n)$ on $L'$. Let $z\in L'$ separate $y$ from $o(x)$ on $L'$. Then $o(z)$ must separate
$x$ from all $x_n$ on $L$ (otherwise, $c(z)\cap (c(x)\cup c(x_n))\ne \emptyset$ and $o(z)$ is ambiguous). But this is not possible, since $x_n\to x$ on $L$. Thus, $o$ is continuous.
\\
Suppose that $y=o(x)$ is also in $B$. Then $o:B\rightarrow B$, by continuity, and, since $o$ has a period two point, $o$ must have a fixed point, which it doesn't. So for each accessible branch $B$
there is an accessible branch $B'\ne B$ such that $o:B\rightarrow B'$ is a continuous surjection. 
Since $inf\{diam(c(x)):x\in B\}=0$, the corresponding continua $\hat{c}(x)\subset \hat U$ have diameters limiting on 0. Thus $B'$ must be adjacent to $B$; that is, the corresponding collections of accessible prime ends,
$\hat B$ and $\hat{B}'$ are the two branches of the stable manifold of a fixed inaccessible prime end.
\end{proof}

%Next, we define three families of sets that we will use to cover $A$ and eventually decompose $A$.  The first of these families is essentially the collection of a ``stable set" for each non-fixed accessible points.  The second collection defines a similar family for fixed points and covers the remainder of the complimentary domains.  Lastly, the inaccessible points of $A$ for their own family.  In what follows, we describe these sets more formally to obtain Lemma~\ref{step1b}.

Suppose that $f$ leaves invariant the complimentary domain $U$ and also leaves invariant all the accessible unstable branches of periodic points accessible from $U$. Suppose that $B$ is an accessible unstable branch of an accessible periodic point. Given $x\in B$, let $B^-_x$ and $B^+_x$ denote the path components of $B\backslash \{x\}$. For each
$y\in B^-_x$ and $z\in B^+_x$, the continuum $c_{y,z}$ consisting of the union of $c(y)$, $c(z)$, the arc in $B$ from $y$ to $z$, and the arc in $B'$ from $o(y)$ to $o(z)$ separates $M$ into two components,
one of which, call it $U_{y,z}$, lies entirely in $U$. The continuum $C(y,z):=c(y,z)\cup U_{y,z}$ is a non-separating planar continuum, as is $C(x):=\cap_{y\in B^-_x,z\in B^+_x}C(y,z)$. $C(x)$ is just a thickened up version of $c(x)$ with the additional properties that $f(C(x))=C(f(x))$ and the collection $\{C(x):x\in B\}$ is upper semi-continuous. 
\\
Suppose that there are $n$ periodic, inaccessible prime ends $\mathbf{p}_0,\ldots,\mathbf{p}_{n-1}$, listed in cyclic order in $\partial \hat U$. For each $i\in \{0,\ldots,n-1\}$, let $\hat{B}_i$ be a branch of the stable manifold of $\mathbf{p}_i$ in  $\partial \hat U$ and let $B_i$, $i=0,\ldots,n-1$, be the corresponding accessible unstable branches of the accessible fixed points $p_i$ in $A$. We choose the $\hat{B}_i$ so that $o(B_i)\cap B_{i+1}=\emptyset$, $i\in \{0,\ldots,n-1\}$, subscripts taken mod($n$). For each
$(x_0,\ldots,x_{n-1})$ with $x_i\in B_i$,
$i=0,\ldots,n-1$, let $s(x_0,\ldots,x_{n-1}):=(\cup_{i\in \{0,\ldots,n-1\}}C(x_i))\cup (\cup_{i\in \{0,\ldots,n-1\}}
[o(x_i),x_{i+1}])$, where $[o(x_i),x_{i+1}]$ denotes the arc from $o(x_i)$ to $x_{i+1}$ in $W^u(p_{i+1})$, indices taken mod($n$). $M\backslash s(x_0,\ldots,x_{n-1})$ consists of two components, one of which, call it
$U_{(x_0,\ldots,x_{n-1})}$, is contained in $U$. Let $S_{(x_0,\ldots,x_{n-1})}:=s(x_0,\ldots,x_{n-1})\cup 
U_{(x_0,\ldots,x_{n-1})}$. Then 
$S_U:=\cap S_{(x_0,\ldots,x_{n-1})}$, the intersection being over all $(x_0,\ldots,x_{n-1})$, $x_i\in B_i$,
$i=0,\ldots.n-1$, is a continuum with $S_U\backslash U=\{p_0,\ldots,p_{n-1}\}$ and $f(S_U)=S_U$.
\\
Now for general $f$ let $m> 0$ be such that $f^m$ fixes all complementary domains of $A$ and the 
induced homeomorphism on prime ends fixes stable branches of periodic prime ends. Define
$C(x)$ for non-periodic accessible points $x$, and $S_U$ for each complementary domain $U$, as above, using $f^m$. Then $f(C(x))=C(f(x))$ for every $x$ accessible from $M\backslash A$ and $f(S_U)=S_{f(U)}$ for each complementary domain $U$.   We have:

\begin{lemma}\label{step1b}
The collection $\mathcal{C}$:=$\{C(x):x\in A$, $x$ is accessible from $M\backslash A$, and $x$ is not periodic under $f\}\cup \{S_U:U$ is a complimentary domain of $A\}\cup \{\{z\}: z\in A$ and $z$ is not accessible from $M\backslash A\}$ is an $f$-invariant upper semi-continuous  collection that covers $M$.
\end{lemma}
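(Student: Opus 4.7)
The plan is to verify the three required properties---covering of $M$, $f$-invariance, and upper semi-continuity---in order, using the explicit constructions of $C(x)$ and $S_U$ recorded in the paragraphs preceding the lemma. Covering is largely a matter of cataloguing: a non-accessible point of $A$ is its own singleton in $\mathcal{C}$; a non-periodic accessible $x\in A$ lies in $C(x)$ because $\{x,o(x)\}\subset c(x)\subset C(x)$; and an accessible periodic point is one of the $p_i$ on $\partial U$ for some complementary domain $U$, hence lies in $S_U$. For $w$ in a complementary domain $U$, either $w$ lies in the ``core'' $S_U$, or, near an accessible branch $B$ bounding $U$, $w$ lies in some $C(x)$ with $x\in B$: as $x$ varies over $B$, continuity of $x\mapsto C(x)$ and the fact that $c(x)\setminus\{x,o(x)\}\subset U$ imply that the $C(x)$'s sweep out a strip of $U$ near $B$.

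The $f$-invariance is bookkeeping: $f$ permutes accessible periodic orbits, accessible branches, and complementary domains, and preserves accessibility. The identities $f(C(x))=C(f(x))$ and $f(S_U)=S_{f(U)}$ are already recorded just above the lemma, and $f$ sends non-accessible points to non-accessible points, so $f(\{z\})=\{f(z)\}\in\mathcal{C}$ whenever $\{z\}\in\mathcal{C}$.

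For upper semi-continuity I would reduce to three sub-claims: (i) on each accessible branch $B$, the map $x\mapsto C(x)$ is upper semi-continuous, which is already recorded; (ii) if $x_k\in B$ and $x_k\to p_i$ (a periodic endpoint of $B$ on $\partial U$), then $C(x_k)$ eventually lies in any prescribed neighborhood of $S_U$; and (iii) if non-trivial elements $K_k\in\mathcal{C}$ contain points converging to a non-accessible $z\in A$, then $\mathrm{diam}(K_k)\to 0$. Sub-claim (ii) is immediate from $S_U=\bigcap S_{(x_0,\ldots,x_{n-1})}$, since $C(x_k)$ sits inside $S_{(x_0,\ldots,x_{n-1})}$ when $x_k$ plays the role of $x_i$ and the other $x_j$'s are chosen close to their $p_j$'s.

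Sub-claim (iii) is the main obstacle. Proposition~\ref{finite_complements} gives only finitely many $S_U$'s, so after passing to a subsequence $K_k=C(x_k)$ with $x_k$ accessible from a single complementary domain $U$. If $\mathrm{diam}(C(x_k))\not\to 0$, passing to a Hausdorff limit yields a non-degenerate continuum $L\ni z$ with $L\setminus A\subset\overline{U}$, inherited from $c(x_k)\setminus\{x_k,o(x_k)\}\subset U$. Using the local product structure at $z$ (Theorem~\ref{productstructure}, Lemma~\ref{prongs}), expansivity, and the defining property $f^n(c(x_k))\subset W^s_{\alpha/2}(f^n(x_k))$ for sufficiently large $n$, I would extract from $L$ a chain of cross-cuts of $U$ with diameters tending to zero and principal point $\{z\}$, exhibiting $z$ as accessible from $U$---contradicting $\{z\}\in\mathcal{C}$.
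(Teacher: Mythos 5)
The paper itself offers no displayed proof of Lemma~\ref{step1b}; it is stated as a record of what the preceding two paragraphs have constructed, with the facts $f(C(x))=C(f(x))$, $f(S_U)=S_{f(U)}$, and ``$\{C(x):x\in B\}$ is upper semi-continuous'' asserted in the text just above. So your write-up is an attempt to supply a proof the authors left implicit, not a comparison with an existing one. Your handling of covering and $f$-invariance is in line with what the paper records and is fine in outline, though the covering of $U\setminus S_U$ by the $\bigcup_{x\in B_i} C(x_i)$ deserves a sentence: the nested family $S_{(x_0,\ldots,x_{n-1})}$ exhausts $\overline{U}$ as the $x_i$ run to the ends of the $B_i$, and the $C(x)$ are the slices between consecutive levels. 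Your sub-claim (ii) also needs the remark that you are using nestedness plus compactness to get $S_{(y_0,\ldots,y_{n-1})}\subset V$ for $y_i$ near $p_i$, and that $C(x_k)\subset S_{(y_0,\ldots,y_{n-1})}$ only when $x_k$ lies between $p_i$ and $y_i$ --- so the conclusion is ``eventually inside $V$,'' not ``inside $S_U$.''

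The real gap is in sub-claim (iii). Having a non-degenerate Hausdorff limit $L\ni z$ with $L\subset\overline{U}$ does \emph{not} by itself produce a chain of cross-cuts with diameters tending to $0$ and principal set $\{z\}$: $L$ is a single continuum of fixed diameter, so it has no built-in shrinking family; and a point $z$ on the boundary of a simply connected domain can be a limit of a non-degenerate continuum in $\overline{U}$ while lying only in the \emph{impression} of a prime end (a non-degenerate continuum) and not in any principal set, hence non-accessible. So the implication ``$L$ non-degenerate $\Rightarrow z$ accessible'' is exactly the kind of statement that fails for wild boundaries, and nothing in your sketch rules this out. The ingredient your argument never actually engages is the dynamics: since $p_k\to z$ with $z$ inaccessible, a short argument (using (i) and (ii)) shows $x_k\to\infty$ along its branch, so one can write $x_k=f^{m_k}(\xi_k)$ with $\xi_k$ in a compact fundamental arc of $B$ and $m_k\to\infty$; then equivariance gives $C(x_k)=f^{m_k}(C(\xi_k))$, and the defining inclusion $f^n(c(\xi))\subset W^s_{\alpha/2}(f^n(\xi))$, together with the uniform forward contraction of stable continua that expansivity provides on the compact arc, forces $\mathrm{diam}\,C(x_k)\to 0$ directly. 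That is the mechanism; ``extracting cross-cuts from $L$'' does not substitute for it.
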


%The collection $\mathcal{C}$:=$\{C(x):x\in A$, $x$ is accessible from $M\backslash A$, and $x$ is not periodic under $f\}\cup \{S_U:U$ is a complimentary domain of $A\}\cup \{\{z\}: z\in A$ and $z$ is not accessible from $M\backslash A\}$ is an upper semi-continuous  collection that covers $M$. 

We would like for $\mathcal{C}$ to be a {\em decomposition} of $M$ - but if a singular point in
$A$ is accessible from complementary domains $U$ and $V$, then $S_U\cap S_V \ne \emptyset$.
We would also like for the elements of $\mathcal{C}$ to be non-separating continua - but $S_U$ will not have this property if some singular point in $A$ is accessible from $U$ by two different directions. We will
remedy these problems by splitting any singular point of $A$ that is accessible from $k$ distinct
directions (that is, has $k$ distinct accessible prime ends associated with it) into $k$ distinct points.  

\begin{lemma}\label{step2}
Let $A\subset M$ be a transitive expansive attractor of a homeomorphism $f:M\to M$ such that each domain of $M\setminus A$ is a topological disk.  Let $\mathcal{S}$ denote the collection of all singular points of $A$ that are accessible from more than one direction in $M\backslash A$.   Then there exists a surface $M'$, a homeomorphism $f':M'\ra M'$ with transitive expansive attractor $A'$, and a semi-conjugacy from $f'|_{A'}$ to $f|_A$ that is finite-to-1 on preimages of $\mathcal{S}$, 1-to-1 on preimages of $A\setminus \mathcal{S}$, and no point of $A'$ is accessible from more than one direction in $M'\backslash A'$. Moreover, every component of $M'\setminus A'$ is a topological disk.
\end{lemma}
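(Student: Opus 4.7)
The plan is to build $M'$ by a finite local surgery at each $x\in\mathcal{S}$, splitting $x$ into $k(x)\ge 2$ distinct new points, one per accessible prime end with principal set $\{x\}$ (collected over all complementary domains of $A$ in $M$). Since $\mathcal{S}$ is finite (Theorem~\ref{productstructure}) and each $x$ has only finitely many accessible prime ends (Propositions~\ref{finite_complements} and~\ref{ratrot}), the modification is finite.

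Fix $x\in\mathcal{S}$ with $n=n(x)$ prongs and accessible prime ends $\mathbf{x}_1,\ldots,\mathbf{x}_k$. Choose a singular product neighborhood $N_x$ of $x$ from Theorem~\ref{productstructure}. By Lemma~\ref{prongs}, the local $n$-od $CW^u_\epsilon(x)$ partitions $N_x\setminus\{x\}$ into $n$ cyclically ordered sectors, and each $\mathbf{x}_j$ enters $x$ through a specific angular position between two adjacent prongs. Partition these $n$ sectors cyclically into $k$ consecutive blocks, one per accessible prime end. Carry out the surgery by cutting $N_x$ along short initial sub-arcs of the $k$ prongs lying between adjacent blocks, yielding $k$ closed pie-slice regions. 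Each pie slice is a closed topological disk whose apex will become the split point $x^j$; rescale the apex angle to a full $2\pi$ to make $x^j$ an interior point of a new disk, and reglue along its circular-arc boundary to $M\setminus N_x$. Performing this at every $x\in\mathcal{S}$ yields a closed compact surface $M'$ with a continuous projection $\pi:M'\to M$ that is a homeomorphism off $\pi^{-1}(\mathcal{S})$ and $k(x)$-to-one over $x\in\mathcal{S}$. Set $A':=\pi^{-1}(A)$. Since $f$ permutes $\mathcal{S}$ and sends each accessible prime end at $x$ to one at $f(x)$, it lifts uniquely to $f':M'\to M'$ with $\pi\circ f'=f\circ\pi$. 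The attractor, transitivity, and expansiveness properties transfer through $\pi$: choosing $\alpha'<\alpha/2$ small enough that distinct copies $x^j, x^{j'}$ have disjoint $\alpha'$-neighborhoods in $M'$, expansiveness of $f|_A$ with constant $\alpha$ forces expansiveness of $f'|_{A'}$ with constant $\alpha'$, and transitivity passes through the finite-to-one semi-conjugacy $\pi|_{A'}$. By construction each $x^j$ is accessible only via $\mathbf{x}_j$ in $M'\setminus A'$; each complementary disk $U$ of $A$ remains the same open subset of $M'$, with its singular boundary points $x$ replaced by the relevant $x^j$, so the disk property is preserved.

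The main obstacle is the local surgery itself: I must verify that (i) the rescaled pie slices reglue to form a genuine closed topological surface, (ii) $A'$ retains the local product structure of Theorem~\ref{productstructure} at each $x^j$---regular or singular with at least two prongs, so that no epin\'e is created---and (iii) no new complementary domain arises. The delicate point is the Cantor transverse structure in Theorem~\ref{productstructure}(i): local stable leaves in each sector accumulate at $x$ from within, and a cut transverse to the prongs could disrupt this. I would avoid that by choosing the cutting sub-arcs to lie along actual initial segments of the unstable prongs themselves---segments already contained in $A$---so that the surgery cuts only along $A$; Lemma~\ref{prongs} then guarantees these sub-arcs really separate the adjacent sectors in $\bar{B}_\delta(x)$, and one verifies directly that the local product structure at $x^j$ is the union of those of its constituent sectors, giving $n(x^j)=s_j+1\ge 2$ prongs, where $s_j\ge 1$ is the number of sectors in the $j$-th block.
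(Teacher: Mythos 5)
Your overall strategy---a finite local surgery at each $x\in\mathcal{S}$ splitting $x$ into $k(x)$ points, one per equivalence class of accessible prime ends---is the right idea, and it is the idea of the paper. But the specific cut you propose goes fatally wrong. You choose the cutting arcs to be initial segments of unstable prongs of $x$, i.e.\ arcs contained in $A$ itself. Cutting $M$ along such an arc $\gamma\subset A$ and then regluing the pie slices (whether by identifying the two free copies of $\gamma$ inside each slice, or by leaving the slit open) necessarily produces either (a) points of $A'$ over interior points $y$ of $\gamma$ with $\#\pi^{-1}(y)=2$, violating the requirement that the semi-conjugacy be one-to-one on preimages of $A\setminus\mathcal{S}$, or (b) a new bigon-shaped complementary domain along the slit, whose accessible boundary is the doubled prong, again violating the fiber condition (and also changing which points of $A$ are accessible). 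Moreover, the far endpoint of each cutting sub-arc lies either in the interior of $N_x$ (where the surgery creates a non-manifold fold point that your rescaling does not repair) or on $\partial N_x$ (where the identifications at that boundary point among $M\setminus N_x$ and the two adjacent rescaled slices are not addressed). Your stated reason for cutting along the prongs---to avoid disrupting the transverse Cantor structure---is backwards: it is precisely the cut through $A$ that destroys the local product structure along the cut arc.

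The paper's resolution is to make all the cuts in the complement of $A$, never in $A$. The point $x$ is accessible, so each accessible direction at $x$ is approached through a type~(ii) empty sector which lies in a complementary domain $U$. The paper first builds the invariant continuum $S_U\subset\overline U$ of Section~\ref{usc}, replaces it by a closed disk $D^j$ (by cutting $S_U$ out, compactifying with prime ends, and sewing in a disk), and then cuts along $f$-invariant arcs $\tau^j_r\subset D^j$ reaching $x$ from the interior of $D^j$. Those arcs are in $M\setminus A$, so doubling them does not double any point of $A$; the quotient by the resulting equivalence is one-to-one off $\mathcal{S}$ by construction. The $D^j$ also solve your ``far endpoint'' problem---each $\tau^j_r$ terminates at a periodic point in the interior of $D^j$, and the regluing is organized via the prime-end compactifications $\hat W^j$ of $D^j\setminus\bigl(\partial D^j\cup\bigcup\tau\bigr)$ together with the end-compactification of $M_1\setminus(\{p\}\cup\bigcup\mathring D^{j(s)})$. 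Finally, after this surgery the new complementary domains need not be disks, so the paper re-invokes Lemma~\ref{step0} at the end; your sketch does not account for this last step. To salvage your argument you would need to replace the cut-along-prongs with cuts along complement-side arcs reaching $x$ (essentially reproducing the $\tau^j_r$ and $D^j$ apparatus), and then append an application of Lemma~\ref{step0}.
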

\begin{proof}
For each complementary domain $U^j$, consider the components $U_0^j,\ldots,U_{n-1}^j$, $n=n(j)$, of
$U^j\backslash S_U^j$. On the circle of prime ends associated with $U_i^j$ there is a pair of accessible prime ends  $\hat{p}_i^j$ and $\hat{p}_{i+1}^j$ corresponding to accessible periodic points $p_i^j$ and $p_{i+1}^j$ and an arc $\gamma_i^j$ between $\hat{p}_i^j$ and $\hat{p}_{i+1}^j$ consisting of
prime ends having defining sequences of cross-cuts with endpoints in $S_U^j$. We compactify $M\backslash S_U^j$ with the arcs $\gamma_i^j$ in the obvious way.
If $U^1,\ldots,U^m$ are the complementary domains of $A$, let $M_0:=(M\backslash (\cup_{j\in\{1,\dots,m\}}S_U^j))\cup (\cup_{j\in \{1,\ldots,m\}, i\in \{1,\ldots,n(j)\}}\gamma_i^j)$ and let
$f_0:M_0\rightarrow M_0$ be given by $f_0(x)=f(x)$ if $x\in M\backslash (\cup_{j\in\{1,\dots,m\}}S_U^j)$ and
$f_0(\mathbf{x})=\hat {f}(\mathbf{x})$ if $\mathbf{x}\in \cup_{j\in \{1,\ldots,m\}, i\in \{1,\ldots,n(j)\}}\gamma_i^j$. Now for each $j$, sew  the boundary of a closed disk $D^j$ to $\cup_{ i\in \{1,\ldots,n(j)\}}\gamma_i^j$ to create a surface $M_1$. It is an easy matter to extend $f_0$ to a homeomorphism
$f_1:M_1\rightarrow M_1$ having $A$ as an expansive attractor (and $f_1|_A=f|_A$). In passing from
$f:M\rightarrow M$ to $f_1:M_1\rightarrow M_1$ we have replaced each $S_{U}^j$ with a more manageable $D^j$ which will make it easier to blow up periodic singular points that are accessible from
more than one direction.
\\
On the boundary of each disk $D^j$ there are points $x_r^j$, $r=1,\ldots,n(j)$, that are identified with the 
periodic points accessible from $U^j$ (we abuse notation to use $U^j$ to denote the complementary domain in $M_1\backslash A$ that meets $D^j$). We may assume that in the construction of $f_1$ we have 
created closed arcs $\tau^j_r\subset D^j$ with $\tau^j_r\cap \tau^j_k=\emptyset$ for each $j$ and each
$r\ne k\in \{1,\ldots,n(j)\}$, with $\tau^j_r\cap \partial D^j=\{x_r^j\}$, $\tau^j_r\setminus \{x_r^j\}\subset\mathring{D}^j$, and with $\{\tau^j_i:r=1,\ldots,n(j),j=1,\ldots,m\}$ invariant under $f_1$ (in particular, for
each $r$ and $j$, the endpoint of $\tau^j_r$ interior to $D^j$ is periodic).
\\
Let $\mathcal{S}$ denote the collection of all singular points of $A$ that are accessible from more than one direction in $M_1\backslash A$, and suppose that $p\in \mathcal{S}$ is accessible from $k=k(p)>1$ directions. Let $b_0,\ldots,b_{n-1}$ be the local unstable branches of $p$
listed, mod($n$), in cyclic order. The accessible branches occur in disjoint pairs $\{b_{i(s)},b_{i(s)+1}\}$,  $s=0,\ldots,k-1$. We choose the subscripting and the function $i$ so that $i(0)=0$ and $i:\{0,\ldots,k-1\}\to \{0,\ldots,n-1\}$ is increasing. For each $s\in \{0,\ldots,k-1\}$, let $U^{j(s)}$ be the complementary domain
of $A$ in $M_1$ from which $b_{i(s)}$ and $b_{i(s)+1}$ are accessible. There is then a unique $r(s)\in \{1,\ldots,n(j(s))\}$ so that $\tau^{j(s)}_{r(s)}$ accesses $p$ between $b_{i(s)}$ and $b_{i(s)+1}$. For
each $j\in \{1,\ldots.m\}$ let $T(j):=\{s: j(s)=j\}$ and let $W^j:=D^j\backslash (\partial D^j \cup (\cup_{s\in T(j)}\tau^j_{r(s)}))$. For each point $x\in \partial D^j \backslash (\cup_{s\in T(j)}\tau^j_{r(s)})$ there
is a unique accessible prime end $\hat x \in \partial \hat{W}^j$; for the interior (to $D^j$) endpoint $e(j,r(s))$ of $\tau ^j_{r(s)}$ there is a unique accessible prime end $\hat{e}(j,r(s))\in \partial \hat{W}^j$; and for each $s\in T(j)$ and for each $y\in \tau^j_{r(s)}\backslash \{e(j,r(s))\}$ there are exactly two accessible prime ends,
$\hat{y}^-$ and $\hat{y}^+$ in $\partial \hat{W}^j$, with $\hat{y}^-$ on the $b_{i(s)}$ side of $\tau^j_{r(s)}$
and $\hat{y}^+$ on the $b_{i(s)+1}$ side. In particular, there are accessible prime ends $\hat{x}^{j+}_{r(s)}$ and $\hat{x}^{j-}_{r(s)}$ corresponding to the endpoint $x^j_{r(s)}$ of $\tau^j_{r(s)}$ on $\partial D^j$ that is identified with $p$ in the construction of $M_1$.
\\
Now let $\bar{W}(p)$ be the quotient of the union of the disks $\hat{W}^{j(s)}$ of prime points and prime ends of $W^{j(s)}$, over $s\in \{0,\ldots,k-1\}$, in which $\hat{x}^{j(s)+}_{r(s)}$ is identified with $\hat{x}^{j(s+1)-}_{r(s+1)}$, $s+1$ being understood mod($k$). Let $X:=M_1\backslash(\{p\}\cup(\cup_{s=0,\ldots,k-1}\mathring{D}^{j(s)}))$. The space $X$ has $k$ ends: let $\hat{p}_s$, $s=0,\ldots,k-1$ denote the end of $X$ on which $b_{i(s)}$ and $b_{i(s)-1}$, $i(s)-1$ taken mod($k$), limit. Let $\bar{X}$ denote the end-compactification of $X$: $\bar{X}:=X\cup \{\hat{p}_0,\ldots,\hat{p}_{k-1}\}$. Let $Y$ denote the quotient of the union of $\bar{X}$ and $\bar{W}(p)$ 
in which every point $x\in \partial D^{j(s)}\backslash \cup_{t\in T(j(s))}\tau^{j(s)}_{r(s)}$ is identified with the corresponding point $\hat{x}\in \partial \hat{W}^{j(s)}$ for each $s=0,\dots,k-1$ and in which the point $\hat{x}^{j+}_{r(s)}=\hat{x}^{j-}_{r(s)}$ is identified with $\hat{p}_i$, $i=0,\ldots,k-1$. $Y$ is a surface with a single boundary component,  $\partial Y=\cup_{s=0,\ldots,k-1}\hat{\tau}^{j(s)}_{r(s)}$, where $\hat{\tau}^{j(s)}_{r(s)}$
is the arc $\hat{\tau}^{j(s)}_{r(s)}:=\{\hat{y}^-:y\in \tau^{j(s)}_{r(s)}\}\cup \{\hat{e}(j(s),s)\}\cup\{\hat{y}^-:y\in \tau^{j(s)}_{r(s)}\}$. Let $D(p)$ be a disk and sew $\partial D(p)$ to $\partial Y$ to make the surface $M_1'$.
\\
Let $A_1':=(A\backslash \{p\})\cup \{\hat{p}_0,\ldots,\hat{p}_{k-1}\}\subset M_1'$. The points $\hat{p}_i$ may still be singular in $A_1'$, but each is accessible only from the single complementary domain of $A_1'$ in $M_1'$ that contains $\mathring{D}(p)$ and the interiors of all the  $\hat{W}^{j(s)}$, $s=0,\ldots,k-1$.
If there is still a singular point $q$ in $A_1'$ that is accessible from more than one complementary domain of $A_1'$ in $M_1'$, we repeat the above process, replacing $A$ by $A_1'$, $M$ by $M_1'$, $p$ by $q$, and, if $j=j(s)$ for some $s$, $D^j$ by $\hat{W}^j$. It is important to note we already have the necessary arcs $\tau^j_i$ accessing $q$ from the various
complementary domains - we use these in all iterations of the above so that, when the splitting open process is finally completed, we know how to define the map.
\\
Let $M'$ denote the surface that results from splitting all points in $\mathcal{S}$ and let $A':=(A\backslash \mathcal{S})\cup (\cup_{p\in \mathcal{S},s\in \{0,\ldots,k(p)-1\}} \{\hat{p}_s\})$. We define $f':M'\to M'$
as follows. If $p\in \mathcal{S}$ then $f(p)=q$ for some $q\in \mathcal{S}$. Then $k(q)=k(p)$ and for each $s\in \{0,\ldots,k(p)-1\}$ there is a unique $t\in \{0,\ldots,k(q)-1\}$ such that $f(\tau^{j(s)}_{r(s)})=\tau^{j(t)}_{r(t)}$ (in fact $s-t$ is constant mod($k(p)$)): define $f'(\hat{p}_s):=\hat{q}_t$. For $y\in \tau^{j(s)}_{r(s)}\backslash \{e(j(s),r(s))\}$, $z=f(y)\in \tau^{j(t)}_{r(t)}\backslash \{e(j(t),r(t))\}$: define $f'(\hat{y}^-):=\hat{z}^-$,  $f'(\hat{y}^+):=\hat{z}^+$, and $f'(\hat{e}(j(s),r(s)):=\hat{e}(j(t),r(t))$. (Recall that $\hat{e}(j,r)$ denotes the accessible prime end in $\partial \hat{W}^j$ corresponding to the interior endpoint of $\tau_r^j$.)
This defines $f'$ on $\cup_{p\in \mathcal{S}} \partial D(p)$. We extend $f'$ to a homeomorphism of
$\cup_{p\in \mathcal{S}} D(p)$ in such a way that $\cup_{p\in \mathcal{S}, s\in \{0,\ldots,k(p)-1\}} \{\hat{p}(s)\}$ is locally attracting in $\cup_{p\in \mathcal{S}} D(p)$. All other points in $x\in M'$ are points of $M$ and for these we define $f'(x):=f(x)$. The result is that $f':(M',A')\rightarrow (M',A')$ is an extension
of  $f:(M,A)\rightarrow (M,A)$, no point of $A'$ is accessible from more than one direction in $M'\backslash A'$, $A'$ is an expansive attractor in $M'$ under the homeomorphism $f'$, and
$(A,f)$ is recovered from $(A',f')$ by the quotient map that identifies $\hat{p}_0,\ldots,\hat{p}_{k(s)-1}$
with $p$ for each $p\in \mathcal{S}$, and is otherwise one-to-one. \\

Now it may be the case that a some complementary domains of $A'$ in $M'$ are not open topological disks (this will happen, for example, if some $p\in A$ is accessible from two different directions in a single complementary domain of $A$ in $M$). If this is the case, apply Lemma~\ref{step0}.
\end{proof}
\begin{remark}\label{remark1} 
For the $f':(M',A')\to (M',A')$ constructed in Lemma \ref{step2}, 
each of the decomposition elements $S_U$, $U$ a complementary domain of $A'$ in $M'$, is a closed topological disk that is periodic under $f'$. By modifying $f'$ only on the interiors of the $S_U$, we may arrange that if $S_U$ has period $n$, then there is a periodic point $p_U\in\mathring{S}_U$ of period $n$.
\end{remark}
\begin{prop}\label{step3}
Suppose that $f:M\to M$ is a surface homeomorphism with transitive expansive attractor $A$, that every
component of $M\setminus A$ is a topological disk, and that no point of $A$ is accessible from $M\setminus A$ by two inequivalent directions. The collection $\mathcal{C}$ of Lemma~\ref{step1b} is then an$f$-invariant upper semi-continuous decomposition of $M$ into a surface $\tilde{M}$ homeomorphic with $M$.
\end{prop}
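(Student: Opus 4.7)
The plan has three main steps; the $f$-invariance and upper semi-continuity of $\mathcal{C}$ were already provided by Lemma~\ref{step1b}, so the remaining content of the proposition is (a) that $\mathcal{C}$ is a genuine partition of $M$, and (b) that the resulting quotient is a surface homeomorphic to $M$. The two standing hypotheses play essential and complementary roles: the disk hypothesis on complementary domains ensures that each $S_U$ is a nice planar continuum, while the hypothesis that no point of $A$ has two inequivalent accesses precisely rules out the two kinds of collisions anticipated in the text preceding Lemma~\ref{step2}.

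First I would verify pairwise disjointness of the three types of elements of $\mathcal{C}$. The intersections with $A$ separate cleanly by type: singletons $\{z\}$ sit at inaccessible points of $A$; each $C(x)\cap A=\{x,o(x)\}$ consists of non-periodic accessible points; and each $S_U\cap A$ consists of accessible periodic points lying on the unstable branches accessible from $U$. For the portions inside complementary domains, distinct $S_U,S_V$ live in the disjoint domains $U,V$; two continua $C(x),C(x')$ accessed from a common $U$ live in disjoint substrips by the uniqueness clause in Proposition~\ref{step1a} (otherwise a single $x$ would have two valid partners $o(x)$); and the only delicate case, $C(x)\cap S_U$ with $x$ accessible from $U$, I would handle by unwinding the nested-intersection definition of $S_U$ and showing that $C(x)$ lies on the ``non-periodic'' side of a suitably-chosen loop $s(x_0,\ldots,x_{n-1})$, obtained by pushing the $x_i$ close to the $p_i$.

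Second I would verify cellularity of each element of $\mathcal{C}$. Singletons are trivially cellular. Each $C(x)$ is by construction the nested intersection of the closed topological disks $C(y,z)$ of Section~\ref{usc}, each a non-separating planar continuum. Each $S_U$ is, by definition, the nested intersection of the closed planar disks $S_{(x_0,\ldots,x_{n-1})}$; combined with the stated fact $S_U\setminus U=\{p_0,\ldots,p_{n-1}\}$ and the disk hypothesis on $U$, this shows that $M\setminus S_U$ is connected, so $S_U$ is cellular in $M$.

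Third, with an upper semi-continuous decomposition of the surface $M$ into cellular continua in hand, I would invoke R.\,L.~Moore's decomposition theorem as generalized to arbitrary $2$-manifolds (see, e.g., Daverman's monograph on decompositions of manifolds): the quotient map $M\to \tilde M=M/\mathcal{C}$ is a near-homeomorphism, and in particular $\tilde M$ is a surface homeomorphic to $M$. Since $\mathcal{C}$ is $f$-invariant, $f$ descends to a homeomorphism $\tilde f:\tilde M\to\tilde M$. The main obstacle in this plan will be the disjointness analysis of step one, most acutely the case $C(x)\cap S_U=\emptyset$: this requires a careful simultaneous analysis of the two nested-intersection definitions and an explicit verification that for any prescribed $C(x)$ one can find a defining loop $s(x_0,\ldots,x_{n-1})$ of $S_U$ whose enclosed region in $U$ is disjoint from $C(x)$.
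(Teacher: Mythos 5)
Your proposal is correct and follows essentially the same route as the paper: verify that $\mathcal{C}$ is a genuine decomposition into non-separating (cellular) planar continua, then invoke Moore's theorem. The paper's own proof is terser, treating only the $S_U\cap S_V$ disjointness and the non-separating property of each $S_U$ (both immediate from the accessibility hypotheses) and leaving the $C(x)$-versus-$C(x')$ and $C(x)$-versus-$S_U$ checks implicit; your more careful treatment of those cases is a sound elaboration rather than a different argument.
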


\begin{proof}
If $U$ and $V$ are distinct complementary domains of $A$, then $S_U\cap S_V=\emptyset$ since no points of $A$ are accessible from both $U$ and $V$. Hence $\mathcal{C}$ is a decomposition of $M$. Furthermore, no point of $A$ is accessible from two different directions in the same complementary domain, so each $S_U$ is a non-separating planar continuum. Thus $\mathcal{C}$ is an upper semi-continuous decomposition of $M$ into non-separating planar continua that is invariant under $f$. Moore's theorem (\cite{Moore}) asserts that the decomposition space $\tilde M$ is a surface homeomorphic with $M$.
\end{proof}

Note that each decomposition element of $\mathcal{C}$ meets $A$ in a non-empty and finite set. Let
$\pi:A\to \tilde{M}$ be the quotient map that takes $x\in A$ to the $C\in \tilde{M}$ for which $x\in C$.
Then $\tilde{f}\circ \pi=\pi \circ f$ and to prove Theorem \ref{classificationtheorem} it remains to show that $\tilde{f}$ is pseudo-Ansov.

\section{Double Covers}\label{doublecover}
Were the $\tilde{f}:\tilde{M}\to \tilde{M}$ constructed in the preceding section expansive, it would be immediate that $\tilde{f}$ is pseudo-Anosov. However, if $U$ is a complementary domain from which only one periodic point $p$ of $f$ in $A$ is accessible, then the unstable foliation of $\tilde{M}$ induced by $A$ may have a 1-pronged singularity at $\pi(p)$: as in the Plykin example, $\tilde{f}$ would fail to be expansive. We will fix this by passing to a branched double cover of $\tilde{M}$. That $\tilde{f}$ is pseudo-Anosov will follow from expansivity of the lift of $\tilde{f}$ to the double cover.  
%The properties of the double cover and the expansivity of the lift are given in Proposition~\ref{doubcov} below.
\\
The argument we will give for expansivity of the lift of $\tilde{f}$ will be based on viewing the double cover of $\tilde{M}$ as the decomposition space of an expansive attractor that double covers $A$, and this argument will require that for each
complementary domain $U$ of the double cover of $A$ there are at least two periodic points accessible from $U$.  The double covering will be branched exactly over the odd-pronged singularities. If $p$ is the only periodic point accessible from $U$ and $p$ is either nonsingular or an even-pronged singularity, then $\pi(p)$ will be an odd-pronged singularity in $\tilde{M}$, which will be unwrapped by the double cover. That is, $U$ will be unwrapped into a domain from which $p$ and its twin are accessible. If $p$ is the sole fixed point accessible from $U$ and is odd-pronged, double covering will fail to unwrap $U$. In this case there is an inaccessible branch of $W^u(p)$ which we unzip to create a second fixed point accessible from $U$ (the unzipping process is described in an appendix).
\\
Given a closed surface $S$ (like $\tilde{M}$) that has a one-dimensional foliation $\mathcal{F}$ whose
(finitely many) singularities $\mathcal{S}$ are of the finite-branch variety (like the unstable foliation of $\tilde{M}$), 
there is a double cover of the surface, $\eta:\tilde{S}\to S$, branched over the odd-pronged singularities
of $\mathcal{F}$, with the property that the pull-back $\tilde{\mathcal{F}}$ of $\mathcal{F}$ has only
even-pronged singularities. Moreover, any homeomorphism $g:S\to S$ that preserves $\mathcal{F}$
lifts to a homeomorphism $\tilde{g}:\tilde{S}\to \tilde{S}$ that preserves $\tilde{\mathcal{F}}$. 
The double cover is constructed as follows. Given an $n$-pronged singularity $p$ of $\mathcal{F}$, let $i(p):=(2-n)/2$. By the Poincar\'{e}-Hopf formula, $\sum_{p\in \mathcal{S}}i(p)=\chi(S)$ is an integer, thus there is an even number, say $2k$, of odd-pronged singularities in $\mathcal{F}$. 

Let $\{U_{\beta}\}$ be an open cover of $S\setminus \mathcal{S}$ for which there are homeomorphisms
$\Phi_{\beta}:(0,1)\times (0,1)\to U_{\beta}$ so that $\Phi_{\beta}^*(\mathcal{F})$ is the horizontal foliation of
$(0,1)\times (0,1)$ and assume that $U_{\alpha}\cap U_{\beta}$ is connected for each $\alpha$ and $\beta$.  Let $\mathcal{D}:=\cup_{\beta}(U_{\beta}\times \{\beta\} \times \{-1\}\cup U_{\beta}\times \{\beta\} \times \{1\})$
be the disjoint union of two copies of each $U_{\beta}$ and let $\sim$ be the equivalence relation on
$\mathcal{D}$ defined by $(x,\alpha,i)\sim (y,\beta,j)$ iff: $x=y$, $i=j$, and $\Phi_{\alpha}^{-1}\circ \Phi_{\beta}$ is increasing in the horizontal coordinate (where defined); or $x=y$, $i=-j$, and $\Phi_{\alpha}^{-1}\circ \Phi_{\beta}$ is decreasing in the horizontal coordinate (where defined). $\mathcal{D}/\sim$ is 
a surface and $\eta:\mathcal{D}/\sim \to S\setminus \mathcal{S}$ given by $\eta([x,\beta,i])=x$ is a double
cover. If $m$ is the number of even-pronged singularities in $\mathcal{S}$, then $\mathcal{D}/\sim$ has
$2m+k$ ends. Let $\tilde{S}$ be the end-compactification of $\mathcal{D}/\sim$. The double cover then
extends to a branched double cover $\eta:\tilde{S}\to S$, branched over each odd-pronged singularity in
$\mathcal{S}$. The pull-back $\tilde{\mathcal{F}}:=\eta^*(\mathcal{F})$ is orientable; in particular, an end of $\mathcal{D}/\sim$ lying over a $(2n+1)$-pronged singularity of $\mathcal{F}$ is a $(4n+2)$-pronged singularity of $\tilde{\mathcal{F}}$. Let $\tilde{\mathcal{S}}:=\tilde{S}\setminus (\mathcal{D}/\sim)=\eta^{-1}(\mathcal{S})$.

There are any number of ways to ``unwrap" an odd-pronged singularity with a branched covering map. The reason we use the orientation double cover is that we can be sure that foliation preserving homeomorphisms lift. To see that this is the case, consider an element $[\gamma]$ in the fundamental
group $\pi_1(S\setminus \mathcal{S})$. There are then $\beta_0,\ldots,\beta_{n-1}$ and $0=t_0<t_1<\cdots<t_n=1$ so that $\gamma([t_{i},t_{i+1}])\subset U_{\beta_i}$ for $i=0,\ldots,n-1$. For each $i\in \{0,\ldots,n\}$, let $r(i)=1$ if $\Phi_{\beta_{i+1}}^{-1}\circ \Phi_{\beta_i}$ is increasing in the horizontal coordinate, and let $r(i)=-1$ if $\Phi_{\beta_{i+1}}^{-1}\circ \Phi_{\beta_i}$ is decreasing in the horizontal coordinate (we take subscripts mod($n$), so that $\beta_n=\beta_0$). Let $r(\gamma):=\prod_{i=0,\ldots,n}r(i)$. Then $r(\gamma)$ is independent of the choice of the $\beta_i$ and is stable under perturbation of $\gamma$ so we have a well-defined $r:\pi_1(S\setminus \mathcal{S})\to \{-1,1\}$, which is clearly a group homomorphism. One sees directly from the construction of $\tilde{S}$ that $\gamma$ lifts to a loop in $\tilde{S}\setminus \tilde{\mathcal{S}}$ if and only if $r(\gamma)=1$. That is, $\eta_*(\pi_1(\tilde{S}\setminus \tilde{\mathcal{S}}))=ker(r)$.

Now, if $g:S\to S$ is a homeomorphism that preserves $\mathcal{F}$ and $\gamma$ is a loop in
$S\setminus \mathcal{S}$, then $r(g\circ \gamma)=r(\gamma)$. Thus $(g\circ \eta)_*(\pi_1(\tilde{S}\setminus \tilde{\mathcal{S}})\subset \eta_*(\pi_1(\tilde{S}\setminus \tilde{\mathcal{S}})$ and it follows
that $g:S\setminus \mathcal{S}\to S\setminus \mathcal{S}$ lifts to a homeomorphism $\tilde{g}:\tilde{S}\setminus \tilde{\mathcal{S}}\to \tilde{S}\setminus \tilde{\mathcal{S}}$, which clearly extends
to $\tilde{g}:\tilde{S}\to \tilde{S}$.

%\begin{prop}\label{doubcov}
%Suppose that $\tilde f:\tilde M\to \tilde M$ is the induced homeomorphism on the decomposition space of a surface homeomorphism with attractor $A$ meeting the conditions of Proposition~\ref{step3}.  Then $\tilde f$ induces a lift map  $\tilde{f}_1:\tilde{M}_1\to \tilde{M}_1$ on the branched orientable double $\tilde{M}_1$ of the foliation $\mathcal{F}$ of $\tilde M$ induced by $A$ such that $\tilde{A}_1$ (the double of $A$) is a attractor also meeting the conditions of Proposition~\ref{step3} and additionally having the property that for each complementary domain $U$ of $\tilde{A}_1$ in $\tilde{M}_1$, $\tilde f$ has at least two periodic points accessible from $U$.
%\end{prop}
%\begin{proof}
Suppose that $f:M\to M$ is a homeomorphism having expansive attractor $A$ with the properties:
all complementary domains of $A$ are topological disks; all periodic points in $A$ accessible from
$M\setminus A$ are fixed by $f$ and none of these fixed points is accessible from inequivalent directions in $M\setminus A$; if $p$ is a fixed point of $A$ accessible from the complementary domain $U$ and there are no other fixed points in $A$ accessible from $U$, then $W^u(p)$ has an even number of branches; and in the upper semi-continuous decomposition $\mathcal{C}$ of $M$ as constructed in Section ~\ref{usc}, each of the decomposition elements $S_U$, $U$ a complementary domain is a closed topological disk having a point $p_U\in \mathring{S}_U$ fixed by $f$. (Recall that $S_U$ is the decomposition element containing the fixed points in $A$ accessible from $U$.) To avoid proliferation of tilde's, let
$f_1:M_1\to M_1$ denote the induced homeomorphism on the decomposition space $M_1$.
%We assume that there is some complementary domain from which only a single fixed point in $A$ is accessible (otherwise, we can proceed straight to the proof of expansivity of $f_1$).
Let $\eta_1:\tilde{M}_1\to M_1$ be
the orientation double of the foliation $\mathcal{F}$ of $M_1$ induced by $A$ constructed above and let $\tilde{f}_1:\tilde{M}_1\to \tilde{M}_1$ be a lift of $f_1$.

\begin{prop}\label{doubcov}
There is a surface $\tilde{M}$, a homeomorphism $\tilde{f}:\tilde{M}\to \tilde{M}$ with transitive expansive attractor $\tilde{A}$ and a branched double cover $\eta:\tilde{M}\to M$ with the properties:
\begin{enumerate}
\item $\eta \circ f=\tilde{f}\circ \eta$;
\item $\eta(\tilde{A})=A$;
\item all complementary domains of $\tilde{A}$ in $\tilde{M}$ are topological disks;
\item no point of $\tilde{A}$ is accessible from the complement by inequivalent directions;
\item if $p\in \tilde{A}$ is a periodic point of $\tilde{f}$ accessible from a complementary domain $U$
of $\tilde{A}$, then there is another periodic point $q\ne p$, $q\in \tilde{A}$, of $\tilde{f}$ that is also accessible from $U$.
Moreover,
\item $\tilde{M}_1$ is the decomposition space of $\tilde{M}$, as in Proposition \ref{step3}, and 
$\tilde{\pi} \circ \tilde{f}_1=\tilde{f}\circ \tilde{\pi}$, with the decomposition map $\tilde{\pi}:\tilde{M}\to\tilde{M}_1$ a lift of the decomposition map $\pi:M\to M_1$. 
\end{enumerate}
\end{prop}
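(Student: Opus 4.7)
My plan is to build $\tilde M$ as a branched double cover of $M$ sitting over $\eta_1:\tilde M_1\to M_1$ through the decomposition map $\pi:M\to M_1$. The branching of $\eta:\tilde M\to M$ will be over the $f$-periodic points $p_U\in\mathring{S}_U$ (supplied by Remark \ref{remark1}) for exactly those complementary domains $U$ such that $[S_U]$ is an odd-pronged singularity of $\mathcal F$ in $M_1$; call this branch set $\mathcal B$.

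First I would construct $\eta$. Over $M\setminus\bigcup_U\mathring{S}_U$ the decomposition map $\pi$ is a homeomorphism onto its image, so pulling back $\eta_1$ yields a genuine unbranched double cover there. To extend over each disk $S_U$: if $[S_U]$ is regular or even-pronged under $\mathcal F$, then $\eta_1$ is trivial in a neighborhood of $[S_U]$, and I glue in two disjoint copies of $S_U$ along $\partial S_U$; if $[S_U]$ is odd-pronged, then $\eta_1^{-1}([S_U])$ is a single point, and I glue in one disk that is the double cover of $S_U$ branched at $p_U$. The result is a surface $\tilde M$ with a branched double cover $\eta:\tilde M\to M$ whose branch locus is exactly $\mathcal B$, together with a natural quotient $\tilde\pi:\tilde M\to\tilde M_1$ satisfying $\eta_1\circ\tilde\pi=\pi\circ\eta$; property (6) then holds by construction.

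Next I would lift the dynamics. Because $f$ permutes complementary domains and preserves the prong structure of $\mathcal F$ through $f_1$, we have $f(\mathcal B)=\mathcal B$; hence $f|_{M\setminus\mathcal B}$ lifts to a homeomorphism of $\tilde M\setminus\eta^{-1}(\mathcal B)$ which extends across the branch points to $\tilde f:\tilde M\to\tilde M$ with $\eta\circ\tilde f=f\circ\eta$, giving (1). I would pin down the particular lift so that $\tilde\pi\circ\tilde f=\tilde f_1\circ\tilde\pi$. Setting $\tilde A:=\eta^{-1}(A)$ gives (2); since $\mathcal B\subset M\setminus A$, the restriction $\eta|_{\tilde A}:\tilde A\to A$ is unramified, and expansiveness, the attractor property, and transitivity all transfer from $A$ to $\tilde A$ by standard finite-cover arguments (replacing $\tilde f$ by $\tilde f^2$ if a deck transformation happens to preserve the orbit of a transitive point). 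Each component of $\tilde M\setminus\tilde A$ is the $\eta$-preimage of some complementary domain $U$---either two disjoint copies of $U$, or a single branched double cover of $U$ ramified at one interior point---and hence an open disk, giving (3); since the branching is interior to each $S_U$, no extra accessibility direction is created at any boundary point, giving (4).

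The main obstacle is property (5). For complementary domains $U$ of $A$ in $M$ from which $m\ge 2$ periodic points of $f$ are accessible, the preimage $\eta^{-1}(U)$ inherits at least two accessible periodic points in either the split or the branched case. The delicate case is $m=1$: the hypothesis guarantees that the unique accessible fixed point $p$ has an even number of unstable branches, and---as the Plykin example suggests and as must be verified in general by a direct analysis of the collapse $S_U\mapsto[S_U]$ and the combinatorics of accessible versus inaccessible periodic prime ends on $\partial\hat U$---exactly this parity forces $[S_U]$ to be an odd-pronged singularity of $\mathcal F$. Consequently $p_U\in\mathcal B$, the preimage $\eta^{-1}(U)$ is a single connected disk, and the two distinct preimages of $p$ under $\eta$ furnish the required pair of periodic points of $\tilde f$ accessible from that disk. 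Pinning down this parity correspondence cleanly in all configurations is the principal technical point on which the whole construction hinges.
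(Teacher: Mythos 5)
Your construction follows the same basic strategy as the paper's: pull $\eta_1$ back along the decomposition map $\pi$ to get a cover of $M$ away from the collapsed sets, then glue in disks (two disjoint copies at even-pronged singularities, a branched disk at odd-pronged ones) and lift $f$. However, there are two problems, one of which is a genuine gap.

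First, the claim that $\pi$ is a homeomorphism onto its image on $M\setminus\bigcup_U\mathring{S}_U$ is false: the decomposition $\mathcal C$ also contains the nontrivial continua $C(x)$ attached to accessible nonperiodic $x$, and these lie in the closures of the complementary domains, not inside the $\mathring{S}_U$. So $\pi$ collapses plenty outside the $S_U$'s. This by itself is not fatal (a pullback of a genuine covering along an arbitrary continuous map is still a covering), but it signals that you are not tracking what $\pi$ does carefully enough.

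The genuine gap is that you have forgotten the \emph{inaccessible} singular points of $A$. The singularity set $\mathcal{S}$ of $\mathcal{F}$ on $M_1$ has two kinds of members: $[S_U]$ for complementary domains $U$, which you treat, and $[\{p\}]$ for singular points $p\in A$ inaccessible from $M\setminus A$ (which occur, e.g., as interior singularities of a derived-from-pA attractor). Such a $p$ survives the collapse as a singleton decomposition element, and if $p$ is odd-pronged then $[p]$ is a branch point of $\eta_1$. Since $p\notin\bigcup_U\mathring{S}_U$, the ``pullback'' over $M\setminus\bigcup_U\mathring{S}_U$ is not an unbranched double cover near $p$, and your gluing step never repairs it. Your declared branch locus $\mathcal B$, consisting only of the $p_U$'s, is therefore incomplete, and the claimed $\tilde M$ is not well constructed. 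The paper handles this by removing the full set $\mathcal S_0=\bigcup_{C\in\mathcal S}C$ (not just the $\mathring{S}_U$'s) and gluing in a branched disk $B_C$ at every odd-pronged singular decomposition element $C$, branched at $p$ itself when $C=\{p\}$ is inaccessible and at $p_U$ when $C=S_U$. Your parity observation for the $m=1$ case ($[S_U]$ has $k-1$ prongs when the lone accessible fixed point has $k$ unstable branches) is correct and needs no further verification, but it only concerns property (5) and does not touch the missing inaccessible-singularity case.
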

\begin{proof}
Let $\mathcal{S}$ denote the set of singularities of the unstable foliation $\mathcal{F}$ of $M_1$: note that each $S_U$, $U$ a complementary domain from which only one fixed point in $A$ is accessible, is an odd-pronged singularity in $\mathcal{S}$. Let $\mathcal{D}$ be the disjoint collection of disks $U_{\beta}\times \{\beta\} \times \{i\}$, and $\sim$ the equivalence relation, 
as in the above construction of $\tilde{M}_1$.

Let  $\mathcal{D}_0:=\cup_{\beta}((U^0_{\beta}\times \{\beta\} \times \{-1\})\cup(U^0_{\beta}\times \{\beta\} \times \{1\}))$, where $U^0_{\beta}:=\cup_{C\in U_{\beta}}C$, and define the equivalence relation $\sim_0$ on $\mathcal{D}_0$ by $(x,\alpha,i)\sim_0(y,\beta,j)$ iff $x=y$ and $(C,\alpha,i)\sim(C,\beta,j)$, where $x\in C\in \mathcal{C}$. Let $\tilde{M}_0:=\mathcal{D}_0/\sim_0$. (So $\tilde{M}_1\setminus \tilde{\mathcal{S}}$ is a u.s.c.-decomposition of $\tilde{M}_0$.) Let $\mathcal{S}_0:=\cup_{C\in \mathcal{S}}C\subset M$. Then $\eta_1:\tilde{M}_0\to M\setminus \mathcal{S}_0$ is a double cover and the restriction $f:M\setminus \mathcal{S}_0\to M\setminus \mathcal{S}_0$ lifts to a homeomorphism $\tilde{f}_0:\tilde{M}_0\to
\tilde{M}_0$ defined by $\tilde{f}_0([(x,\beta,i)])=[(f(x),\alpha,j)]$ provided $\tilde{f}_1([C,\beta,i)]=[(f(C),\alpha,j)]$,where  $x\in C\in \mathcal{C}$.

The singularities $C\in \mathcal{S}$ come in two forms: $C=\{p\}$ for some singularity $p$ of $A$, $p$ inaccessible from $M\setminus A$, or $C=S_U$, $U$ a complementary domain of $A$ in $M$. For each 
of these singularities $C$, let $B_C$ be a closed topological disk in $M$ with $C\subset \mathring{B}_C$. We may choose these disks to be pairwise disjoint: $B_C\cap B_{C'}=\emptyset$ for $C\ne C'\in \mathcal{S}$. Let $\mathcal{S}^e$ and $\mathcal{S}^o$ be the collections of even- , resp., odd-pronged singularities of $\mathcal{F}$. If $C\in \mathcal{S}^e$, the inclusion $i_C:B_C\setminus C\to M\setminus \mathcal{S}_0$ lifts in two ways to embeddings $\tilde{i}^{\pm}_C:B_C\setminus C\to \tilde{M}_0\setminus \tilde{\mathcal{S}}_0$. We take the disks $B_C$ small enough so that the images of $\tilde{i}^+_C$ and $\tilde{i}^-_C$ are disjoint.

If $C\in \mathcal{S}^o$, let $\delta_C:B_C\to B_C$ be a branched double cover, branched over $p$ if $C=\{p\}$ or over $p_U$ if $C=S_U$. Then the restriction of $\delta_C$ to 
$B_C\setminus C$, followed by the inclusion of $B_C\setminus C$ into $M\setminus \mathcal{S}_0$
lifts to an embedding $\tilde{\delta}_C:B\setminus C\to \tilde{M}\setminus \tilde{\mathcal{S}}_0$.

Let $\tilde{M}:=\tilde{M}_0\cup(\cup_{C\in \mathcal{S}^e}(B_C\times \{-1\}\cup B_C\times \{1\}))\cup(\cup_{C\in \mathcal{S}^o}B_C)/\bowtie$, with $\bowtie$ identifying $(x,\pm1)\in (B_C\setminus C)\times \{\pm1\}$ with $\tilde{i}^{\pm}_C(x)$ and, for $c\in \mathcal{S}^o$, $x\in B_C\setminus C$ with $\tilde{\delta}_C(x)$. Now $\eta_1: \tilde{M}_0\to M\setminus \mathcal{S}_0$ extends naturally to a branched double cover $\eta:\tilde{M}\to M$ and $\tilde{f}_0:\tilde{M}_0\to \tilde{M}_0$ extends to a homeomorphism $\tilde{f}:\tilde{M}\to \tilde{M}$ that is a lift of $f$. It is straightforward to check that $\tilde{A}:=\eta^{-1}(A)$ is a transitive expansive attractor for $\tilde{f}$. Moreover, if $U$ is a complementary domain of $\tilde{A}$ in $\tilde{M}$, then $\tilde{f}$ has at least two periodic points in $\tilde{A}$ accessible from $U$.

%In the case that the singularity is $C=S_U$ for some complementary domain $U$, and the fixed points in $A$ accessible from $U$ are $p_0,\dots,p_{k=1}$, then $C$ is $m$-pronged in $\mathcal{F}$ where $m=(\sum_{i=0,\dots,k-1}n_i)$-$k$ and $n_i$ is the number of branches of $W^u(p_i)$.

\end{proof}

\section{Expansivity of the double cover and proof of the classification theorem}\label{exp}

We assume that $\tilde{f}:\tilde{M}\to \tilde{M}$ has transitive expansive attractor $\tilde{A}$ with constant $\alpha$, that every complementary domain of $\tilde{A}$ in $\tilde{M}$ is an open topological disk, and that no point of $\tilde{A}$ is accessible from inequivalent directions in $\tilde{M}\setminus \tilde{A}$.
Let $\ACC$ denote the points of $\tilde{A}$ that are accessible from $\tilde{M}\setminus \tilde{A}$ and let $\ACC_P$ denote 
the set of points in $\ACC$ that are periodic. Let $\mathcal{C}$ be the upper semi-continuous decomposition of $\tilde{M}$ as in the Section \ref{usc} and let $\sim$ be the equivalence on $\tilde{A}$: $x\sim y$ iff $x$ and $y$ are in the same element of $\mathcal{C}$ (that is, $\tilde{\pi}(x)=\tilde{\pi}(y)$). We let $[x]$ denote the $\sim$ equivalence class of $x$. Then $[x]=\{x\}$ if $x\in \tilde{A}\setminus \ACC$; $[x]=\{x,o(x)\}$ if $x\in \ACC \setminus \ACC_P$; and $[x]=\{p:p\in \ACC_P$ and $x$ and $p$ are accessible from the same complementary domain of $\tilde{A}\}$ if $x\in \ACC_P$. We assume that for $x\in \ACC_P$, $[x]$ is not a singleton. Thus if $x\in (W^u(p)\setminus \{p\})\cap \ACC$ for some $p\in \ACC_P$, then $o(x)\in (W^u(q)\setminus \{q\})\cap \ACC$ for some $q\ne p$, $q\in \ACC_P$. Let $\tilde{M}_1:=\{[x]:x\in \tilde{A}\}$, with the quotient topology, and let $\tilde{f}_1:\tilde{M}_1\to \tilde{M}_1$ by $\tilde{f}_1([x]):=[\tilde{f}(x)]$. 
\begin{lemma}\label{expansive} Under the above assumptions, $\tilde{f}_1$ is expansive.
\end{lemma}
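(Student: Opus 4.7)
The plan is to argue by contradiction: assume $\tilde{f}_1$ is not expansive and extract a violation of the expansivity of $\tilde{f}|_{\tilde{A}}$ with constant $\alpha$. Fix a metric on $\tilde{M}_1$ compatible with its topology; by upper semi-continuity of the decomposition $\mathcal{C}$ and compactness of $\tilde{M}$, the quotient map $\tilde{\pi}:\tilde{M}\to\tilde{M}_1$ admits a uniform modulus of continuity: for every $\epsilon>0$ there is $\delta(\epsilon)>0$ such that $d_{\tilde{M}_1}(\tilde{\pi}(p),\tilde{\pi}(q))<\delta$ guarantees the existence of representatives $p'\in C_p$, $q'\in C_q$ with $d_{\tilde{M}}(p',q')<\epsilon$. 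Failure of expansivity produces, for each small $\beta>0$, a pair $\xi\ne\eta$ in $\tilde{M}_1$ with $\sup_n d_{\tilde{M}_1}(\tilde{f}_1^n\xi,\tilde{f}_1^n\eta)<\beta$; I lift to $x,y\in\tilde{A}$ with $[x]=\xi$, $[y]=\eta$, so that $x\ne y$ lie in distinct equivalence classes.

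The main analytic input is the forward contraction of the decomposition elements. The elements of $\mathcal{C}$ meeting $\tilde{A}$ come in only three types: the singletons $\{z\}$ for $z\in\tilde{A}\setminus\ACC$; the thin continua $C(z)$ for $z\in\ACC\setminus\ACC_P$, which are essentially the stable arcs $c(z)$ of Proposition~\ref{step1a}; and the closed disks $S_U$, one for each of the finitely many complementary domains $U$. Because $\tilde{f}^n(c(z))\subset W^s_{\alpha/2}(\tilde{f}^n z)$ for all sufficiently large $n$ and stable continua contract to zero diameter under forward iteration of an expansive homeomorphism, $diam(C_{\tilde{f}^n z})\to 0$ as $n\to+\infty$ whenever $z$ is not periodic-accessible. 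Combined with the modulus of continuity, this forces $d_{\tilde{M}}(\tilde{f}^n x,\tilde{f}^n y)\to 0$ as $n\to+\infty$, provided neither $\xi$ nor $\eta$ is a class of the form $\tilde{\pi}(S_U)$.

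For the backward direction I exploit that the finitely many periodic-accessible classes $\{\tilde{\pi}(S_U)\}$ form a mutually separated subset of $\tilde{M}_1$, so the $\tilde{M}_1$-closeness $<\beta$ of the backward orbits forces them to approach the \emph{same} $\tilde{\pi}(S_U)$-class. By Proposition~\ref{ratrot}, each $z\in\ACC\setminus\ACC_P$ lies on an unstable branch of some $p\in\ACC_P$ and its backward orbit converges to $p$ (passing to an iterate that fixes $p$); hence $\tilde{f}^{-m}x\to p$ and $\tilde{f}^{-m}y\to p'$ as $m\to\infty$, with $p,p'\in\ACC_P$ accessible from a common complementary domain $U$. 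If $p=p'$, then $x,y\in W^u(p)$ are backward asymptotic to $p$ and, by the previous paragraph, also forward asymptotic; expansivity of $\tilde{f}|_{\tilde{A}}$ forces $x=y$, a contradiction. If $p\ne p'$, the forward asymptoticity places $y$ on a stable continuum from $x$ across $U$ landing on the adjacent accessible unstable branch $W^u(p')$; the uniqueness clause of Proposition~\ref{step1a} then identifies $y$ with $o(x)$, so $[x]=[y]$, again a contradiction.

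The main obstacle will be the periodic sub-case, where some iterate $\tilde{f}_1^{n_0}\xi$ equals a class $\tilde{\pi}(S_U)$ and the forward-contraction argument fails for that orbit. In this case the orbit of $\xi$ cycles through a finite collection of the $\tilde{\pi}(S_{U_i})$-classes, and the closeness together with the mutual separation of these classes forces $\eta$'s orbit to pass within $\beta$ of the same cycle at every iterate simultaneously. Using the modulus of continuity to lift the shadowing, each $\tilde{f}^n y$ must itself lie in the corresponding $S_{U_i}\cap\tilde{A}$, making $\eta$ periodic through the same cycle of classes and giving $\tilde{f}_1^n\xi=\tilde{f}_1^n\eta$ for all $n$, hence $\xi=\eta$. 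The delicate bookkeeping is in the mixed scenario where one of $\xi,\eta$ is periodic and the other only shadows the periodic cycle: here one must rule out non-periodic orbits whose decomposition elements stay close to some $S_{U_i}$ forever, by combining the backward-accumulation analysis of paragraph three with the uniqueness in Proposition~\ref{step1a} applied to the closest accessible periodic pair at each step.
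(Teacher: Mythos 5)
The proposal hinges on a claimed ``modulus of continuity'' for the quotient map $\tilde{\pi}$: that if $d_{\tilde{M}_1}(\tilde{\pi}(p),\tilde{\pi}(q))<\delta$ then there are representatives $p'\in C_p$, $q'\in C_q$ with $d_{\tilde{M}}(p',q')<\epsilon$. This is false, and the failure is exactly what the lemma has to overcome. Take $z\in\ACC\setminus\ACC_P$ with $o(z)$ on the adjacent accessible branch, so $C(z)\supset\{z,o(z)\}$ is a ``long'' continuum spanning the channel. Take non-accessible points $p$ near $z$ and $q$ near $o(z)$, both off $C(z)$. Then $C_p=\{p\}$ and $C_q=\{q\}$ are singletons, so the only available representatives are $p$ and $q$ themselves, which are at distance roughly $d(z,o(z))$. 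Yet $\tilde{\pi}(p)\to\tilde{\pi}(z)=\tilde{\pi}(o(z))\leftarrow\tilde{\pi}(q)$ in $\tilde{M}_1$ as $p\to z$ and $q\to o(z)$, so $d_{\tilde{M}_1}(\tilde{\pi}(p),\tilde{\pi}(q))$ can be made arbitrarily small. Upper semi-continuity of the decomposition only gives the implication in the other direction (classes that meet a small neighborhood of a given class are contained in a larger neighborhood); it does not make $\tilde{\pi}^{-1}$ ``lower'' semi-continuous in the way your argument needs. Once the modulus of continuity is gone, your step 2 (``this forces $d_{\tilde{M}}(\tilde{f}^n x,\tilde{f}^n y)\to 0$'') collapses, since even when the classes $C_{\tilde{f}^n x}$ and $C_{\tilde{f}^n y}$ are tiny singletons, $\beta$-closeness in $\tilde{M}_1$ says nothing about closeness in $\tilde{M}$.

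There is a second, independent gap: the backward analysis in your third paragraph applies only to points of $\ACC$ (whose backward orbits limit on $\ACC_P$), but $x$ and $y$ may well lie in $\tilde{A}\setminus\ACC$, where $[x]=\{x\}$ and $[y]=\{y\}$ are singletons whose backward orbits need not accumulate on any periodic accessible point (indeed, by transitivity they may be dense). Your case split into ``$p=p'$ or $p\ne p'$'' therefore does not exhaust the possibilities. The paper's proof avoids both problems by not passing through a metric estimate on $\tilde{M}_1$ at all: it works with the explicit open cover $U_{[z]}=\{[x]:[x]\subset B_\epsilon([z])\}$, builds the finite compacta $K_N$ tracking orbits near $\ACC_P$, and chases a tower of constants $D,N_1,N_2,\delta,N_3,\eta,\epsilon$ to show, in a three-case analysis on where $\tilde{f}^{N_3-N_1}(x)$ and $\tilde{f}^{N_3-N_1}(x')$ land, that the assumption $[x_n],[x'_n]\in U_{[z_n]}$ for all $n$ forces $[x]=[x']$. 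The whole point of that machinery is precisely to control the phenomenon that your modulus-of-continuity step wishes away: two nearby classes in $\tilde{M}_1$ whose pulled-back points sit on opposite sides of a channel.
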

\begin{proof} As a homeomorphism is expansive if and only if any of its nonzero powers is expansive, we may assume that all elements of $\ACC_P$ are fixed by $\tilde{f}$ and that every branch of $W^u(p),p\in \ACC_P,$ is invariant under $\tilde{f}$.
\\
Given a complementary domain $U$ of $\tilde{A}$ in $\tilde{M}$, let $p_0,\ldots,p_{k-1}$ be the elements of
$\ACC_P$ accessible from $U$, indexed in cyclic order (that is, as their corresponding prime ends occur on the circle of prime ends associated with $U$). Let $b^{\pm}_i$ denote the two accessible branches of
$W^u(p_i)\setminus \{p_i\}$ with $o(b^+_i)=b^-_{i+1}$, $i=0,\ldots,k-1$, subscripts taken mod($k$).
Choose points $y^{\pm}_i\in b^{\pm}_i$ so that $o(y^+_i)=y^-_{i+1}$ for each $i$, let
$[y_i^+,\tilde{f}(y_i^+)]$ denote the closed arc in $b^+_i$ with endpoints $y_i^+ $ and $\tilde{f}(y_i^+)$, and similarly for $[y_i^-,\tilde{f}(y_i^-)]$. For each $N\in \mathbb{N}$ let $K_N(p_i):=\{p_i\}\cup (\cup_{n\le N}\tilde{f}^n([y_i^-,\tilde{f}(y_i^-)]\cup [y_i^+,\tilde{f}(y_i^+)]))$ for each $i$ and let $K_N(U):=\cup_{i=0,\ldots,k-1}K_N(p_i)$. We do the above for each complementary domain and let $K_N:=\cup_U K_N(U)$, the union being over all complementary domains $U$ of $A$ in $M$. The following definitions are made in the context of the metric space ($\tilde{A}$,$d$).

\begin{enumerate}
\item {\bf Defining $D$:} Let $D=\min\{d(p, q) : p\ne q,p, q\in\ACC_P\}$.
\item {\bf Defining $N_1$:} Choose $N_1$ large enough so that if $x\in\ACC\setminus K_{N_1}$, then $d(x, o(x))<\min\{D/3, \alpha/3\}$.
\item {\bf Defining $N_2$:} Let $N_2<N_1$ be small enough so that, for $p\ne q, p,q\in \ACC_P$, if 
$x\in K_{N_2}(p)$ and $y\in K_{N_2}(q)$ then $d(x,y)\ge 2D/3$. 
\item {\bf Defining $\delta$:}  Let $\delta>0$ be small enough so that if $p, q\in\ACC_P, p\not= q$ then\\
  (i) if $x\in \tilde{f}^{-1}(B_\delta(p))\cup B_\delta(p)\cup \tilde{f}(B_\delta(p))$ and $y\in \tilde{f}^{-1}(B_\delta(q))\cup B_\delta(q)\cup\tilde{f}(B_\delta(q))$, then $d(x, y)>2D/3$;\\
  (ii)  $B_{\delta}(p)\cap K_{N_1}=B_{\delta}(p)\cap \tilde{f}^2(K_{N_1})$; and\\
  (iii) if $x\in B_{2\delta}(p)\setminus \{p\}$ then $o(x)\in K_{N_2-2}$. 
\item  {\bf Defining $B(p)$ and $B$:}  For each $p\in\ACC_P$, let $B(p)\subset B_\delta(p)$ be a  product    
(or singular product) neighborhood of $p$ and let $B=\cup_{p\in\ACC_P}B(p)$.  
\item {\bf Defining $N_3$:} Let $N_3<N_2$ be small enough so that $K_{N_3}\subset B$.
\item  {\bf Defining $\eta$:} Let $\eta>0$ be small enough so that if $x,y\in K_{N_1}$ and $d(x, y)<\eta$, then $d(\tilde{f}^k(x), \tilde{f}^k(y))<\alpha$ for $k\le 2N_1-N_2-N_3+1$.
\item  {\bf Defining $\epsilon$:} Choose $\epsilon>0$ small enough so that\\
 (i) $\epsilon< $min$\{\alpha/6,D/6,\delta,\eta/2\}$;\\
 (ii) if $x,y\in K_{N_2+1}\setminus \tilde{f}^{-1}(K_{N_2})$ and $d(x,y)<\epsilon$, then $d(o(x),o(y))<\eta/2$; \\
 (iii) if $d(x,K_{N_1})<\epsilon$ then $\tilde{f}^{N_3-N_1}(x)\in B$;\\
 (iv) if $x\in \tilde{f}^{-1}(B)\setminus B$ then $d(x,K_{N_1})>\epsilon$;\\
 (v) if $x\in K_{N_2}\setminus K_{N_2-1}$ and $y\in K_{N_2-2}\cup (K_{N_1}\setminus K_{N_2+1})$ then $d(x,y)>\epsilon$.
 
\end{enumerate}

For each $[z]\in\tilde A$, let $U_{[z]}=\{[x]\in\tilde A : [x]\subset B_\epsilon([z])\}$.  Then $\{U_{[z]} : [z]\in\tilde A\}$ is an open cover of $\tilde A$.  We will establish expansivity of $\tilde f_1$ by showing that given $[x]\not= [x']$, there is an $n\in\mathbb{Z}$ so that for no $[z]\in\tilde A$ is it the case that $\tilde f_1^n([x]), \tilde f_1^n([x'])\in U_{[z]}$.  Let $x_n=\tilde{f}^n(x)$ and $x'_n=\tilde{f}^n(x')$ and suppose that for each $n$ there is $z_n$ such that $[x_n], [x'_n]\in U_{[z_n]}$. Let $z_n,z'_n\in [z_n]$ be such that $d(x_n,z_n)<\epsilon$ and $d(x'_n,z'_n)<\epsilon$.

Since $\alpha$ is an expansivity constant for $\tilde f$ on $A$, there is an $n$ such that $d(x_n, x'_n)>2\alpha /3$.  Without loss of generality, we take $n$ to be zero so that we assume $d(x_0, x'_0)>2\alpha /3$.  Then from choices (8i) and (2) above, it must be the case that $z_0,z'_0\in K_{N_1}$ so that $d(x_0, K)<\epsilon$ and $d(x'_0, K)<\epsilon$. Choice (8iii) then implies that $x_{N_3-N_1}, x'_{N_3-N_1}\in B$. 
We consider three  exhaustive cases on the disposition of  $x_{N_3-N_1}$ and $ x'_{N_3-N_1}$ in $B$ and show that each leads to a contradiction.\\

{\bf Case 1: At least one of  $x_{N_3-N_1}$ or $x'_{N_3-N_1}$ is not in $\displaystyle\bigcup_{p\in\ACC_P}W_\delta^u(p)$.}  

Then there must exist an $n<N_3-N_1$ so that $x_{n}\in \tilde{f}^{-1}(B(p))\setminus B(p)$ and $x'_{n}\in \tilde{f}^{-1}(B(p'))\cup B(p')$ (or the same with the roles of $x$ and $x'$ reversed). Then, by condition (8iv), $d(x_{n}, K_{N_1})>\epsilon$ so that $z_n\not\in K_{N_1}$. Then $d(z_n,z'_n)<D/3$ by (2). But  choice (4i) for $\delta$ implies $d(x_{n}, x'_{n})>2D/3$, so $d(z_n,z'_n)> 2D/3-2\epsilon> D/3$, by (8i), a contradiction.\\

{\bf Case 2: Each of  $x_{N_3-N_1}$ and $x'_{N_3-N_1}$ is in $\displaystyle\bigcup_{p\in\ACC_P}W_\delta^u(p)$ and exactly one of them is not in $K_{N_1}$. }\\

Assume $x_{N_3-N_1}\in W^u_{\delta}(p)\setminus K_{N_1}$ and $x'_{N_3-N_1}\in W^u_{\delta}(p')\cap K_{N_1}$. Let $n_1:=inf\{n\ge N_3-N-1:x_n\not\in B\}$ and $n'_1:=inf\{n\ge N_3-N-1:x_n'\not\in K_{N_2-1}\}$. If $n_1\le n'_1$ then $d(x_{n_1},K_{N_1})>\epsilon$ and $d(x_{n_1},x'_{n_1})>2D/3$ and
the argument is as in Case 1. If $n'_1<n_1$ then $x'_{n'_1}\in K_{N_2}$. Also, since $d(x_{n'_1},x'_{n'_1})>2D/3$, $z'_{n'_1}\in K_{N_1}$, so, by (8v), $z'_{n'_1}\not\in K_{N_2-2}$. But then by (4iii),
$z_{n'_1}=o(z'_{n'_1})\not\in B_{2\delta}(\ACC_P)$. But $x_{n'_1}\in B_{\delta}(\ACC_P)$ and $\epsilon<\delta$ so $d(x_{n'_1},z_{n'_1})>\epsilon$, a contradiction.\\

{\bf Case 3: Both $x_{N_3-N_1}$ and $y_{N_3-N_1}$ are in $K_{N_1}$.}\\

At least one of these points is not in $\ACC_P$, say $x_{N_3-N_1}$.  Then there is $n$ so that $x_n\in K_{N_2}\setminus \tilde{f}^{-1}(K_{N_2})$ and $x'_n\in K_{N_1}$. Then $z_n\in K_{N_2+1}\setminus K_{N_2-1}$, by (8v), and, by (8ii), $d(o(x_n),o(z_n))=d(o(x_n),z'_n)<\eta/2$. Since $d(z'_n,x'_n)<\epsilon <\eta/2$,
$d(o(x_n),x'_n)<\eta$. Since $[x_n]\ne [x'_n]$, $o(x_n)\ne x'_n$, and there must be $k$ such that $d(\tilde{f}^k(o(x_n),\tilde{f}^k(x'_n))>\alpha$. By (7), such $k$ must be greater than $N:=2N_1-N_2-N_3+1$. Since $x_n\in K_{N_2}\setminus K_{N_2-1}$, $x_{n+k}\not\in K_{N_1}$ so $d(x_{n+k},o(x_{n+k})<\alpha/3$, by (2).
Thus $d(x_{n+k},x'_{n+k})>2\alpha/3$. Now, as in the preamble to Case 1, replacing $x_{n+k}$ by
$y_0$ and $x'_{n+k}$ by $y'_0$, we have $y_{N_3-N_1},y'_{N_3-N_1}\in B$. Since $k+N_3-N_1\ge
N_1-N_2+1$ and $x_n\in K_{N_2}\setminus K_{N_2-1}$, $y_{N_3-N-1}=x_{n+k+N_3-N_1}\not\in K_{N_1}$. This puts us in Case1 or Case 2.
\end{proof}
\begin{proof} (Theorem ~\ref{classificationtheorem})\\
Suppose that $A$ is a transitive expansive attractor for the homeomorphism $f:M\to M$ of the compact surface $M$. We may assume that $A$ and $M$ are connected. If $A=M$ then $A$ is trivially derived from pseudo-Anosov. Otherwise, there is a surface homeomorphism $f_1:M_1\to M_1$ with transitive expansive attractor $A_1$ and a map $\pi_1:A_1\to A$ that semi-conjugates $f_1\mid _{A_1}$ with
$f\mid_A$. All complementary domains of $A_1$ in $M_1$ are topological disks, no point of $A_1$ is accessible from the complement by two inequivalent directions, and $\pi_1$is one-to-one except on the pre-images of singularities of $A$ which are accessible from 
multiple inequivalent directions (lemmas ~\ref{step0} and ~\ref{step2}). There is then an upper semi-continuous decomposition of $A_1$ into 
a surface $M_2$ and a homeomorphism $f_2:M_2\to M_2$ induced by $f_1\mid_{A_1}$ (Proposition ~\ref{step3}). As any two pre-images of a single point under $\pi_1$ lie in the same decomposition element, there is a map $\pi:A\to M_2$ that semi-conjugates $f\mid_A$ with $f_2$. There is a branched covering $\eta:\tilde{M}_2\to M_2$ and a lift $\tilde{f}_2$ of $f_2$ to the surface $\tilde{M}_2$ that is expansive (Proposition \ref{doubcov} and Lemma ~\ref{expansive}). By \cite{hir}, \cite{lew}, $\tilde{f}_2$ is pseudo-Anosov and it follows that $f_2$ is also pseudo-Anosov
(with expanding and contracting foliations pushed down from $\tilde{M}_2$). With $f_2=g$ and $M_2=S$, the conditions for a derived from pseudo-Anosov attractor are satisfied by $f$ and $A$.
\end{proof}

\section{Appendix: unzipping a ray}

Suppose that $f:M\to M$ is a homeomorphism of the surface $M$ and $R\subset M$ is a ray: that is, $R$ is the image of a continuous, one-to-one map $\phi:\mathbb{R}^+\cup \{0\}\to M$. We assume that $R$ is invariant under $f$ and expanding in the sense that $\phi^{-1}\circ f^n\circ \phi(t)\to \infty$, as $n\to \infty$, for all $t>0$. Let $p=\phi(0)$ be the endpoint of $R$. We also assume that there are closed arcs
$A^-$ and $A^+$ in $M$ with the properties: $A^-\cap A^+=\{p\}=R\cap (A^+\cup A^-)$, and $A^-$ and $A^+$ are invariant under $f$.
Let $\Psi:\mathbb{D}^2\to M$ be an embedding with: $\Psi((0,0))=p$; $\Psi(\{(0,y):0\le y\le 1\})=\phi([0,r])$
for some $r>0$; $\Psi(\{(-t\sqrt{2}/2,-t\sqrt{2}/2):0\le t \le 1\})=A^-$; and  $\Psi(\{(t\sqrt{2}/2,-t\sqrt{2}/2):0\le t \le 1\})=A^+$. We further assume that there is an orientation preserving homeomorphism $\beta:[0,1]\to [0,1]$  for which 
$0$ and $1$ are attracting, and an orientation preserving homeomorphism $\gamma:[0,1]\to [0,1]$ for which $0$ is attracting 
, with the property: $\Psi^{-1}\circ f\circ \Psi (s(-t\sqrt{2}/2,-t\sqrt{2}/2)+(1-s)(t\sqrt{2}/2,-t\sqrt{2}/2))=\beta(s)(-\gamma(t)\sqrt{2}/2,-\gamma(t)\sqrt{2}/2)+(1-\beta(s))(\gamma(t)\sqrt{2}/2,-\gamma(t)\sqrt{2}/2)$,
for all $s,t \in [0,1]$. We make some definitions:
\begin{itemize}

\item $q:=\Psi^{-1}\circ f^{-1}\circ \Psi((0,1))$, 
\item $\Upsilon:= \{(-t\sqrt{2}/2,-t\sqrt{2}/2):0\le t \le 1\}\cup \{(t\sqrt{2}/2,-t\sqrt{2}/2):0\le t \le 1\}\cup \{tq:0\le t \le 1\}\subset \mathbb{D}^2$, 
\item $T^{-}:=\{s((1-t)(-\sqrt{2}/2,0)+tq)+(1-s)tq:0\le t,s \le 1\}$,
\item $T^+:=\{s((1-t)(\sqrt{2}/2,0)+tq)+(1-s)tq:0\le t,s \le 1\}$, 
\item $T:=T^-\cup T^+$,
\item $\Delta^-:=\{s(-\sqrt{2}/2,-t\sqrt{2}/2)+(1-s)(-t\sqrt{2}/2,-t\sqrt{2}/2):0\le t,s \le 1\}$, 
\item $\Delta^0:=\{s(-t\sqrt{2}/2,-t\sqrt{2}/2)+(1-s)(t\sqrt{2}/2,-t\sqrt{2}/2):0\le t,s \le 1\}$, 
\item $\Delta^+:=\{s(\sqrt{2}/2,-t\sqrt{2}/2)+(1-s)(t\sqrt{2}/2,-t\sqrt{2}/2):0\le t,s \le 1\}$,
\item $\Delta:=\Delta^-\cup \Delta^0 \cup \Delta^+$.
\end{itemize}

Now let $P:\mathbb{D}^2\to \mathbb{D}^2$ be a continuous surjection so that:
\\(i) $P$ is the identity on $\partial \mathbb{D}^2\cup \Delta^0$;
\\(ii) $P$ collapses $\Delta^-\cup\Delta^+\cup T^-\cup T^+$ horizontally onto $\Upsilon$ (that is, e.g., $P(s((1-t)(-\sqrt{2}/2,0)+tq)+(1-s)tq)=tq$); and
\\(iii) $P:\mathbb{D}^2\setminus (\Delta^-\cup\Delta^+\cup T^-\cup T^+) \to \mathbb{D}^2\setminus \Upsilon$ is a homeomorphism.
 \\We transfer $P$ to $M$ via $\Psi$: let $\rho:M\to M$ be given by $\rho(x)=x$ for $x\not\in \Psi(\mathbb{D}^2)$ and $\rho(x)=\Psi \circ P\circ \Psi^{-1}(x)$ for $x\in \Psi(\mathbb{D}^2)$.

Let $\alpha:[0,1]\to [0,1]$ be the homeomorphism defined by $f(\Psi(tq))=\Psi((0,\alpha(t))$. For $x\in \Psi(T)$, define $s=s_T(x)$ and 
$t=t_T(x)$ by $x=\Psi(s((1-t)(-\sqrt{2}/2,0)+tq)+(1-s)((1-t)(\sqrt{2}/2,0)+tq))$. For such $x,s,t$ we will write
$x=x_T(s,t)$. If  $x\in \Psi(\Delta)$ define
$s=s_{\Delta}(x)$ and $t=t_{\Delta}(x)$ by $x=\Psi(s(t(-\sqrt{2}/2,-\sqrt{2}/2)+(1-t)(-\sqrt{2}/2,0))+(1-s)(t(\sqrt{2}/2,-\sqrt{2}/2)+(1-t)(\sqrt{2}/2,0)))$. In this case we write $x=x_{\Delta}(s,t)$.

Define $g:M\to M$ by 
\[g(x)=\left\{ \begin{array}{ll}
   f(\rho(x)) & \mbox{if $x\not\in \Psi(T\cup \Delta)$}\\
   x_T(\beta(s),\alpha(t)) & \mbox{if $x\in \Psi(T)$}\\
   x_{\Delta}(\beta(s),\gamma(t)) &  \mbox{ if $x\in \Psi(\Delta).$}
 \end{array} \right.\]
 Then $g$ is a {\em near homeomorphism}, that is, $g$ is a uniform limit of homeomorphisms. It follows
 (see [Br]) that the inverse limit space $\hat{M}:=\{(x_0,x_1,x_2,\ldots):g(x_i)=x_{i-1},i\in \mathbb{N}\}$, with the product topology, is homeomorphic with $M$. Thus the shift $\hat{g}:\hat{M}\to \hat{M}$, defined by $\hat{g}((x_0,x_1,\ldots)):=(g(x_0),x_0,x_1,\ldots)$, is a surface homeomorphism.

Let $B^{\pm}$ be the closed arcs in $M$, $B^-:=\Psi(\{(1-t)(-\sqrt{2}/2,0)+tq:0\le t\le 1\})$ and
$B^+:=\Psi(\{(1-t)(\sqrt{2}/2,0)+tq:0\le t\le 1\})$, with endpoints $p^-:=\Psi((-\sqrt{2}/2,0))$ and $p^+:=
\Psi((\sqrt{2}/2,0))$. Then $g(p^{\pm})=p^{\pm}$ and $g(B^{\pm})\supset B^{\pm}$ and the sets
$R^{\pm}:=\{(x_0,x_1,\ldots):x_n\in B^{\pm}$ for some $n\in \mathbb{N}\}$ are expanding invariant rays in $\hat{M}$, under $\hat{g}$, with fixed endpoints $\bar{p}^{\pm}:=(p^{\pm},p^{\pm},\ldots)$. The open 
topological disk $W:=\{(x_0,x_1,\ldots):x_n\in \Psi(\mathring{T})$ for some $n\in \mathbb{N}\}\subset \hat{M}$ can be seen as a channel between the asymptotic rays $R^-$ and $R^+$. 

Define $\bar{\rho}:\hat{M}\to M$ by: $\bar{\rho}(x_0,x_1,\ldots)=\rho(x_0)$, if $x_0\not\in \Psi(\Delta)$,
and $\bar{\rho}(x_0,x_1,\ldots)=\Psi((s_{\Delta}t_{\Delta}(-\sqrt{2}/2,-\sqrt{2}/2)+(1-s_{\Delta})t_{\Delta}(\sqrt{2}/2,-\sqrt{2}/2))$, if $x_0=x_0(s_{\Delta},t_{\Delta})\in \Psi(\Delta)$. Then $\bar{\rho}$ semi-conjugates $\hat{g}$ with $f$ and $\bar{\rho}$ is one-to-one off  $\bar{\rho}^{-1}(R)=R^-\cup R^+\cup
W\cup C$, $C:=\Psi(\{(t\sqrt{2}/2,0):t\in [-1,1]\})$.

Now suppose that $f:M\to M$ is a surface homeomorphism with expanding attractor $A$. Suppose also
that $p\in A$ is a periodic point, accessible from the complementary domain $U$ of $A$ and suppose there are closed arcs $A^{\pm}$, also periodic under $f$, with $(A^-\cup A^+)\setminus \{p\} \subset U$ and $A^-\cap A^+=\{p\}$. If there is a branch $R$ of the unstable set of $p$ (that is, there is a component $B$ of
$W^u_{\alpha}(p)\setminus \{p\}$ with $f^n(B)\supset B$ for some $n>0$ and  $R=\cup_{k\ge 0}f^{kn}(B)) $ that is inaccessible from $M\setminus A$ and there is $\Psi$ as above with $\Psi(\Delta^0)\setminus{p}\subset U$ then $\hat{A}:=\bar{\rho}^{-1}(A)\setminus (W\cup \mathring{C})$ is an expansive attractor
for $\hat{g}$. Furthermore, $(A,f|_A)$ is derived from pseudo-Anosov (resp., transitive) if and only if $(\hat{A},\hat{g}|_{\hat{A}})$ is. If $p$ was the only fixed point of $f$ accessible from $U$, then $\hat{A}$
has one fewer (than $A$) complementary domains from which a single fixed point is accessible.

The above procedure can be readily modified to construct any derived from pseudo-Anosov attractor
from the unstable foliation of an appropriate pseudo-Anosov homeomorphism.

\bibliography{biblio.bib}

\bibliographystyle{amsalpha}

\end{document}